\def\theequation{\thesection.\@arabic \c@equation}
\def\@citecolor{blue}
\def\@linkcolor{blue}
\def\@urlcolor{blue}
\def\theenumi{\@alph\c@enumi}
\theoremstyle{plain}
\newtheorem{lemma}[equation]{Lemma}
\newtheorem{proposition}[equation]{Proposition}
\theoremstyle{definition}
\newtheorem{remark}[equation]{Remark}
\newenvironment{remarkbox}[1][]{%
    \begin{remark}[#1] \pushQED{\qed}}{\popQED \end{remark}}
\newtheorem{example}[equation]{Example}
\newenvironment{examplebox}[1][]{%
    \begin{example}[#1] \pushQED{\qed}}{\popQED \end{example}}
\newtheorem{definition}[equation]{Definition}
\newtheorem{notation}[equation]{Notation}
\newtheorem{discussion}[equation]{Discussion}
\newenvironment{discussionbox}[1][]{%
    \begin{discussion}[#1]\pushQED{\qed}}{\popQED \end{discussion}}
\newtheorem{observation}[equation]{Observation}
\newenvironment{observationbox}[1][]{%
    \begin{observation}[#1]\pushQED{\qed}}{\popQED \end{observation}}
\newtheorem{construction}[equation]{Construction}
\newcounter{step}
\newcommand{\calA}{\mathcal A}
\newcommand{\calB}{\mathcal B}
\newcommand{\scrD}{\mathscr D}
\newcommand{\bbF}{\mathbb F}
\newcommand{\calP}{\mathcal P}
\newcommand{\frakP}{{\mathfrak P}}
\newcommand{\frakp}{{\mathfrak p}}
\newcommand{\frakQ}{{\mathfrak Q}}
\newcommand{\frakq}{{\mathfrak q}}
\newcommand{\ints}{\mathbb{Z}}
\def\to{\longrightarrow}
\DeclareMathOperator{\Trace}{Tr}
\DeclareMathOperator{\height}{ht}
\DeclareMathOperator{\Aut}{Aut}
\DeclareMathOperator{\Spec}{Spec}
\newcommand{\define}[1]{\emph{#1}}
\newcommand{\minus}{\ensuremath{\smallsetminus}}
\DeclareMathOperator{\Frac}{Frac}
\DeclareMathOperator{\GL}{GL}
\newcommand{\noetherDiff}{\ensuremath{\scrD_N}}
\newcommand{\dedekindDiff}{\ensuremath{\scrD_D}}
\DeclareMathOperator{\Ramif}{Ramif}
\title{Ramification in modular invariant rings}
\author{Manoj Kummini}
\address{Chennai Mathematical Institute, Siruseri, Tamilnadu 603103. India}
\email{mkummini@cmi.ac.in}
\thanks{MK was partially supported by an Infosys Foundation fellowship.}
\author{Mandira Mondal}
\address{School of Mathematics and Statistics, University of Hyderabad,
Telangana 500046. India}
\email{mandira@uohyd.ac.in}
\thanks{MM thanks Department of Science and Technology, Government of India
for the DST INSPIRE grant DST/INSPIRE/04/2021/001549.}
\begin{document}
\begin{abstract}
Let $p$ be a prime number, $\Bbbk$ a field of characteristic $p$ 
and $G$ a finite $p$-group acting on a standard graded polynomial ring
$S = \Bbbk[x_1, \ldots, x_n]$ as degree-preserving $\Bbbk$-algebra
automorphisms.
Assume that $G$ is generated by pseudo-reflections.
In our earlier work
(\emph{J. Pure Appl. Algebra}, vol. 228, no. 12,  2024)
we introduced a composition series of $G$.
In this note, we study the height-one ramification for the invariant rings
at the consecutive stages of this composition series.
We prove a condition for the extension $S^{G}\subseteq S^{G'}$ to split in
terms of the Dedekind different $\dedekindDiff(S^{G'}/S^G)$.
We construct an example illustrating that
$\dedekindDiff(S^{G'}/S^G)$ need not have `expected' generators.
\end{abstract}

\maketitle

\section{Introduction}
\label{section:intro}

Let $\Bbbk$ be a field and $G\subseteq \GL_n(\Bbbk)$ be a finite group.
Then $G$ acts on the polynomial ring of  
$n$ variables $S = \Bbbk[x_1, \ldots, x_n]$ (with $\deg x_i =1$ for all
$i$) as graded $\Bbbk$-algebra
automorphisms. We denote by $S^G$ the \define{invariant ring}, i.e., the
subring of $S$ consisting of all polynomials invariant under the action of
$G$.
It is known that if $S^G$ is a polynomial ring, then $G$ is generated by
pseudo-reflections, i.e., elements $g \in G$ such that $\{l \in S_1 \mid gl
=l\}$ is an $(n-1)$-dimensional $\Bbbk$-vector space.
The converse is true if $|G|\in \Bbbk^\times$. Moreover,
$S^G$ is a polynomial ring if and only if $S$ is a free (equivalently, flat)
$S^G$-module.
See~\cite[\S 7.2]{BensonInvBook1993}.

Now assume that $\Bbbk$ is a field of characteristic $p>0$ and that 
$G\subseteq \GL_n(\Bbbk)$ is a finite $p$-group.
In this situation, the Shank-Wehlau-Broer conjecture
(R.~J.~Shank and 
D.~L.~Wehlau~\cite[Conjecture~1.1]{ShankWehlauTransferModular1999}, 
reformulated by
A.~Broer~\cite[Corollary~4, p.~14]{BroerDirectSummandProperty2005})
asserts that 
$S^G$ is a polynomial ring if the extension $S^G\subseteq S$ splits, i.e.,
there is an $S^G$-linear map $S \to S^G$ such that the composite 
$S^G\subseteq S \to S^G$ is the identity map of $S^G$; Broer also showed
$G$ is generated by pseudo-reflections if the extension splits.

In~\cite[\S 5]{KumminiMondalPolyInv2022}, we introduced a composition series of
$G$, when $G$ is generated by pseudo-reflections; see
Lemma~\ref{lemma:conjTrans} and Proposition~\ref{proposition:compser}
below. 
Let $G'$ be the penultimate group in such a composition series.
In this case $G'$ is a normal subgroup of $G$ of index $p$ generated by
pseudo-reflections; further, $G/G'\simeq \mathbb{Z}/p\mathbb{Z}$ 
is generated by the class of a pseudo-reflection.
In this paper, we study height-one ramification of the extension $S^{G}
\subseteq S^{G'}$.

Section~\ref{section:trans} is about the Dedekind different of the
extension $S^{G} \subseteq S^{G'}$. A surprising fact is that in this
modular situation, height-one ramification is entirely due to the
inseparability of the residue field extension, unlike the non-modular
situation (Remark~\ref{remarkbox:ramif1}).
In Proposition~\ref{proposition:eltWithNZTr}, we prove a criterion for the
extension $S^G \subseteq S^{G'}$ to split in terms of the Dedekind
different.

In Section~\ref{Section:TheoryofLargerBeta}, we restrict our attention to
certain special $G$. For such $G$, 
we are able to give an explicit description of the generator of the
Dedekind different ideal $\dedekindDiff(S^{G'}/S^G)$ in
Proposition~\ref{proposition:BetaSigmaGreater}.
Using these results, we build Example~\ref{example:main}, where the
Dedekind different ideal $\dedekindDiff(S^{G'}/S^G)$ does not have any
`expected' generators, even though $S^G$ is a direct summand of $S^{G'}$.
(See the examples in the beginning of Section~\ref{section:example} for
context.)

\subsection*{Acknowledgements}
The computer algebra systems~\cite{M2} and~\cite{Singular}
provided valuable assistance in studying examples.

\section{Inertia groups and the Dedekind different}
\label{section:decin}

We collect a few facts about decomposition and inertia groups 
from~\cite[\S 41]{NagataLocalRings1962}.
Let $R$ be a noetherian normal domain with field of fractions $K$. 
Let $L$ be a finite Galois extension of $K$ with automorphism group $G$
and $S$ the integral closure of $R$ in
$L$. Then $G$ acts on $S$, and $S^G = S \cap K = R$.
Let $\frakq$ be a prime ideal of $S$. Write $G_{\mathrm{dec}}$ for the
\define{decomposition group} of $\frakq$, i.e., 
$\{g \in G \mid g\frakq = \frakq \}$.
Write $G_{\mathrm{in}}$ for the \define{inertia group} of $\frakq$, i.e.
$\{g \in G \mid gs-s \in \frakq \;\text{for all}\; s \in S\}$.
Note that for every $g \in G_{\mathrm{in}}$ and for every $s \in \frakq$, $gs \in
\frakq$, so $G_{\mathrm{in}}$ is a subgroup of $G_{\mathrm{dec}}$. 
In fact, $G_{\mathrm{dec}}$ acts on
$S/\frakq$, and $G_{\mathrm{in}}$ is the intersection of the stabilizers of the
elements of $S/\frakq$, so $G_{\mathrm{in}}$ is a normal subgroup of $G_{\mathrm{dec}}$.
Write 
$\frakp := \frakq \cap R,
\frakq_{\mathrm{dec}}  := \frakq \cap S^{G_{\mathrm{dec}}}, 
\frakq_{\mathrm{in}}  := \frakq \cap S^{G_{\mathrm{in}}}.$
The inclusion of groups $G \supseteq G_{\mathrm{dec}} \supseteq G_{\mathrm{in}} \supseteq 0$
yields the inclusion of integrally closed rings and their fraction fields 
\[
\xymatrix{
R \ar@{^{(}->}[r] \ar@{^{(}->}[d] & S^{G_{\mathrm{dec}} } \ar@{^{(}->}[r] \ar@{^{(}->}[d] & S^{G_{\mathrm{in}} }
\ar@{^{(}->}[r] \ar@{^{(}->}[d] & S\ar@{^{(}->}[d] \\
K \ar@{^{(}->}[r] & L^{G_{\mathrm{dec}} } \ar@{^{(}->}[r] & L^{G_{\mathrm{in}} }
\ar@{^{(}->}[r] & L.
}
\]
Then 
$\frakq$ is the unique prime ideal over $\frakq_{\mathrm{dec}}$;
the natural map
$\kappa(\frakp)  \to \kappa(\frakq_{\mathrm{dec } })$ is an isomorphism;
$\frakq_{\mathrm{dec } }$ and $\frakq_{\mathrm{in } }$ is unramified over
$R$; and $G_{\mathrm{dec}}/G_{\mathrm{in}}$ is the automorphism group for
the extensions
$\kappa(\frakp)\to \kappa(\frakq)$ and $\kappa(\frakp)\to
\kappa(\frakq_{\mathrm{in}})$~\cite[Theorem~41.2]{NagataLocalRings1962}.

The following lemma, in this generality, might be known, but
we could not find a source; with the additional hypothesis that $\height
\frakq = 1$ (or, equivalently, that $R$ is a DVR), 
it is proved in various books on number theory.

\begin{lemma}
\label{lemma:UnrIner}
Let $\frakq \in \Spec S$. 
Then $\frakq$ is
unramified over $R$ if and only if the inertia group 
of $\frakq$ is trivial.
\end{lemma}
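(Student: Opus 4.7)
The plan is to use the Nagata theorem quoted above to reduce to the case $G = G_{\mathrm{dec}} = G_{\mathrm{in}}$ and then verify both directions directly. Since the cited theorem tells us that $\frakq_{\mathrm{dec}}$ and $\frakq_{\mathrm{in}}$ are unramified over $\frakp$ in the respective intermediate rings, transitivity of unramified local maps shows that $\frakq$ is unramified over $R$ if and only if $\frakq$ is unramified over $\frakq_{\mathrm{in}}$ in the extension $S^{G_{\mathrm{in}}}\subseteq S$. Moreover, the inertia group of $\frakq$ in the restricted action of $G_{\mathrm{in}}$ on $S$ is still $G_{\mathrm{in}}$ itself. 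So I may replace the pair $(R,G)$ by $(S^{G_{\mathrm{in}}}, G_{\mathrm{in}})$ and assume $G = G_{\mathrm{dec}} = G_{\mathrm{in}}$.

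The implication ($\Leftarrow$) is then essentially a restatement of Nagata's theorem: if $G_{\mathrm{in}} = 1$, then $S = S^{G_{\mathrm{in}}}$, so $\frakq = \frakq_{\mathrm{in}}$ is unramified over $R$ directly by the quoted theorem, with no need for the reduction.

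For ($\Rightarrow$), I work in the reduced setup. There, $\kappa(\frakq)/\kappa(\frakp)$ has trivial automorphism group (since $G_{\mathrm{dec}}/G_{\mathrm{in}} = 1$), so it is purely inseparable; combined with the unramified hypothesis, which makes it separable, the extension is trivial, so $\kappa(\frakq) = \kappa(\frakp)$. The unramified hypothesis also gives $\frakp S_\frakq = \frakq S_\frakq$, that is, the maximal ideal of $S_\frakq$ is generated by $\frakp$. In the paper's setting, $S$ is finite over $R = S^G$ (automatic for a finite group action on a polynomial ring), so $S_\frakq$ is a finite $R_\frakp$-module; the identification $S_\frakq/\frakp S_\frakq = \kappa(\frakq) = \kappa(\frakp) = R_\frakp/\frakp R_\frakp$, combined with Nakayama's lemma applied to the cokernel of $R_\frakp \hookrightarrow S_\frakq$, forces $R_\frakp = S_\frakq$. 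But $G$ fixes $R_\frakp$ elementwise, so it acts trivially on $S_\frakq$ and hence on $L = \Frac(S_\frakq)$; faithfulness of the Galois action then forces $G = 1$. The step I expect to be the main obstacle is this use of Nakayama: outside the paper's context, one cannot assume finite generation of $S$ over $R$, and one must instead invoke a $\frakp$-adic completion argument, fueled by the observation that $R_\frakp$-linearity of $g - \mathrm{id}$ together with $(g - \mathrm{id})(S_\frakq) \subseteq \frakp S_\frakq$ yields $(g - \mathrm{id})^n(S_\frakq) \subseteq \frakp^n S_\frakq$ for every $n \ge 1$.
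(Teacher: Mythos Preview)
Your reduction to $G = G_{\mathrm{in}}$ and the `if' direction match the paper. In the `only if' direction there is a genuine gap: from $\Aut_{\kappa(\frakp)}(\kappa(\frakq)) = 1$ you infer that $\kappa(\frakq)/\kappa(\frakp)$ is purely inseparable, but that implication fails in general (think of $\rationals(\sqrt[3]{2})/\rationals$). What is missing is that the residue extension is \emph{normal}; only then does trivial automorphism group force pure inseparability. The paper supplies exactly this step: for any $s \in S$ the polynomial $\prod_{g \in G}(X - gs) \in R[X]$ reduces modulo $\frakp$ to a product of linear factors in $\kappa(\frakq)[X]$, so every minimal polynomial over $\kappa(\frakp)$ splits in $\kappa(\frakq)$. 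With normality in hand your argument goes through.

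After that fix, your endgame differs from the paper's. The paper argues the contrapositive: assuming $G = G_{\mathrm{in}} \neq 1$ and $\frakp S_\frakq = \frakq S_\frakq$, it uses $S \otimes_R \kappa(\frakp) = \kappa(\frakq)$ and semicontinuity of fiber dimension to get $[\kappa(\frakq):\kappa(\frakp)] \geq |G| > 1$, then combines normality with trivial automorphism group to force inseparability directly. You instead combine normality, separability, and trivial automorphism group to collapse the residue extension to an equality, and then use Nakayama to conclude $R_\frakp = S_\frakq$, so that $G$ acts trivially on $L$. This is a clean alternative. Your concern about finiteness is also unnecessary even in the lemma's stated generality: since $L/K$ is Galois it is separable, and the integral closure of a noetherian normal domain in a finite separable extension is always module-finite, so Nakayama applies without the completion detour.
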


\begin{proof}
`If' is immediate from the above discussion, since $S^{G_{\mathrm{in}}}=S$ and
$\frakq_{\mathrm{in}} = \frakq$.
For `only if', assume that $G_{\mathrm{in}}$ is non-trivial. We want to show that
$\frakq$ is ramified over $S^{G_{\mathrm{in}}}$. For this, we may assume that $G =
G_{\mathrm{in}}$, i.e., $gs-s \in \frakq$ for all $s \in S$ and $g \in G$.
Therefore $\frakp = \frakq_{\mathrm{dec}} = \frakq_{\mathrm{in}}$.
Hence $\frakq$ is the unique prime ideal lying over $\frakp$.
Without loss of generality,
we may further assume that $\frakp S_\frakq = \frakq S_\frakq$.
Therefore $S \otimes_R \kappa(\frakp)  = \kappa(\frakq )$.
Hence the degree 
$[\kappa(\frakq) : \kappa(\frakp)] \geq [L : K ] \geq |G| > 1$.
On the other hand, 
$|\Aut_{\kappa(\frakp)}(\kappa(\frakq))| = 1$ by the above discussion.
Therefore $\kappa(\frakq) / \kappa(\frakp)$ is not a Galois extension.
We claim that it is normal. Assume the claim.
Therefore $\kappa(\frakq) / \kappa(\frakp)$ is inseparable.
Hence $\frakq$ is ramified over $R$.

Now to prove the claim, 
let $\bar s \in \kappa(\frakq )$ and $s \in S$ a representative in $S$.
Then $\prod_{g \in G} (X - gs )$ is a monic polynomial in $R[X]$ (since $R$
is integrally closed) for which $s$ is a root.
Pass to $\kappa(\frakp)$.
This gives a monic polynomial that $\bar s$ satisfies over $\kappa(\frakp)$.
This polynomial splits into linear factors; hence so does the minimal
polynomial of $\bar s$ over $\kappa(\frakp)$.
This proves the claim.
\end{proof}

\begin{notation}
Write $\Ramif_1(S/R) = \{\frakq\in \Spec(S) \mid \height \frakq = 1
\;\text{and}\; \frakq \;\text{is ramified over}\; R\}$.
Let $G'$ be a normal subgroup of $G$ and $A = S^{G'}$.
Let $\Ramif_1(S/A/R) = \{\mathfrak{Q} \in \Spec S \mid 
\height (\mathfrak{Q}) = 1 \;\text{and}\; \mathfrak{Q} \cap A \in
\Ramif_1(A/R)\}$.
\end{notation}

We now restrict the above discussion to invariant rings.
Hereafter $S = \Bbbk[x_1,\ldots,x_n]$, where $\Bbbk$ is a field of
characteristic $p>0$. Set $\deg x_i = 1$ for all $i$.
Let $G$ be a finite $p$-group acting as degree-preserving $\Bbbk$-algebra
automorphisms of $S$. 
A \define{pseudo-reflection} in $G$ is an element $\tau$ such that
$\{l \in S_1 \mid \tau l = l \}$
is an $(n-1)$-dimensional subspace of $S_1$.
This is equivalent to saying that for all $s \in S$, the $S$-ideal
generated by 
$\{ \tau s - s \mid s \in S\}$
is a prime ideal generated by a non-zero element of $S_1$.
In this section, if $\tau$ is a pseudo-reflection, we will take $l_\tau$ to
be any linear form that generates the above ideal; in the next section, we
will impose some conditions for the sake of convenience.
Write $\calP$ for the set of pseudo-reflections in $G$. 
As mentioned above, $G'$ is a normal subgroup of $G$. 
Write $R = S^G$ and $A = S^{G'}$,
(the invariant subrings), $L = \Frac(S)$, 
$K = L^G$ (which equals $\Frac(R)$) and 
$F = L^{G'}$ (which equals $\Frac(A)$).

Let $\dedekindDiff(S/A)$, $\dedekindDiff(S/R)$ and $\dedekindDiff(A/R)$ be
the Dedekind differents of these ring extensions. These are homogeneous
ideals of $S$, $S$ and $A$
respectively. Since $S$ and $A$ and $R$ are UFDs
\cite[3.8.1]{CampbellWehlauModularInvThy11}, they are principal ideals. Fix
homogeneous generators $\Delta_{S/A}$, $\Delta_{S/R}$ and $\Delta_{A/R}$
respectively for these ideals.

\begin{discussionbox}
\label{discussionbox:AB}
We collect a few facts about ramification
from~\cite{AuslBuchsRamifThy1959}.
Write $\noetherDiff =  \noetherDiff(A/R)$ for the Noether different 
(called `homological different' in~\cite{AuslBuchsRamifThy1959}).
Abbreviate $\dedekindDiff(A/R)$ by $\dedekindDiff$.
Then 
\begin{enumerate}

\item
For all $\frakq \in \Spec A$, $\frakq$ is ramified over $R$ if and only if
$\noetherDiff \subseteq \frakq$~\cite[Theorem~2.7]{AuslBuchsRamifThy1959}.

\item
$\noetherDiff \subseteq \dedekindDiff$
(\cite[Proposition~3.1]{AuslBuchsRamifThy1959});

\item
If $\dedekindDiff$ is a proper ideal, then its associated prime ideals
are of height $1$.

\item
$\dedekindDiff$ and $\noetherDiff$ have identical 
height-$1$ associated components.
(Follows from~\cite[Proposition~3.3]{AuslBuchsRamifThy1959}, applied to the
localizations of $R$ at height-one prime ideals.)
\qedhere
\end{enumerate}
\end{discussionbox}

\begin{proposition}
\label{proposition:relDDRamif}
Suppose that $l\in S_1$.
Then
$\Delta_{A/R}\in lS$ if and only if $lS\in \Ramif_1(S/A/R)$.
\end{proposition}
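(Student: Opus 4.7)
The plan is to establish a short chain of equivalences. Set $\frakQ := lS\cap A$. First, observe that $lS$ is a height-$1$ prime of $S$ (as $l\in S_1$ in a polynomial ring) and, since $A\hookrightarrow S$ is a finite integral extension of normal domains, $\frakQ$ is a height-$1$ prime of $A$. Because $\Delta_{A/R}\in A$ and $\dedekindDiff(A/R) = (\Delta_{A/R})$, the containment $\Delta_{A/R}\in lS$ is equivalent to $\dedekindDiff(A/R)\subseteq\frakQ$. On the other side, $lS\in\Ramif_1(S/A/R)$ is, by definition, equivalent to $\frakQ$ being ramified over $R$. The proposition thus reduces to:
\[
\dedekindDiff(A/R)\subseteq\frakQ \quad\Longleftrightarrow\quad \frakQ\text{ is ramified over }R.
\]

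To close this loop, I would go through the Noether different. By Discussion~\ref{discussionbox:AB}(a), $\frakQ$ is ramified over $R$ if and only if $\noetherDiff(A/R)\subseteq\frakQ$, so the remaining task is the equivalence $\noetherDiff(A/R)\subseteq\frakQ$ if and only if $\dedekindDiff(A/R)\subseteq\frakQ$. One direction (``$\Leftarrow$'') is Discussion~\ref{discussionbox:AB}(b). For the other direction, I would localize at $\frakQ$: since $A$ is normal and $\frakQ$ has height~$1$, $A_{\frakQ}$ is a DVR. Assuming $\noetherDiff(A/R)\subseteq\frakQ$, because $A$ is a domain and $\frakQ$ has height~$1$, $\frakQ$ itself is a minimal, hence associated, prime of $\noetherDiff(A/R)$. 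By Discussion~\ref{discussionbox:AB}(d), $\frakQ$ is then also a height-$1$ associated prime of $\dedekindDiff(A/R)$, so $\dedekindDiff(A/R)\subseteq\frakQ$.

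I do not anticipate a serious obstacle. The only mildly delicate step is the primary-decomposition argument in the second paragraph: from ``$\noetherDiff(A/R)\subseteq\frakQ$'' to ``$\frakQ$ is a height-$1$ associated prime of $\noetherDiff(A/R)$'', which uses that $A$ is a domain with $\height\frakQ = 1$, so any minimal prime of $\noetherDiff(A/R)$ contained in $\frakQ$ must equal $\frakQ$. Everything else is formal: the definition of $\Ramif_1(S/A/R)$, the characterization of ramification via the Noether different in Discussion~\ref{discussionbox:AB}(a), and the matching of height-$1$ associated components in Discussion~\ref{discussionbox:AB}(d).
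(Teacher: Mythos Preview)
Your argument is correct and follows essentially the same chain of equivalences as the paper's proof, which simply records
\[
\Delta_{A/R}\in lS \;\Leftrightarrow\; \dedekindDiff \subseteq lS\cap A \;\Leftrightarrow\; \noetherDiff \subseteq lS\cap A \;\Leftrightarrow\; lS\cap A \in \Ramif_1(A/R) \;\Leftrightarrow\; lS \in \Ramif_1(S/A/R),
\]
citing Discussion~\ref{discussionbox:AB} for the middle equivalence. Your write-up is a bit more explicit in justifying $\noetherDiff\subseteq\frakQ \Rightarrow \dedekindDiff\subseteq\frakQ$; note that your minimal-prime argument tacitly uses $\noetherDiff\neq 0$, which holds here since $F/K$ is separable (Galois), but one can also bypass this by reading Discussion~\ref{discussionbox:AB}(d) as $\noetherDiff A_{\frakQ}=\dedekindDiff A_{\frakQ}$ for every height-one prime $\frakQ$.
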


\begin{proof}
We have the following equivalences:
\[
\Delta_{A/R}\in lS \Leftrightarrow
\dedekindDiff \subseteq lS \cap A \Leftrightarrow
\noetherDiff \subseteq lS \cap A \Leftrightarrow
 lS \cap A\in \Ramif_1(A/R) \Leftrightarrow 
 lS \in \Ramif_1(S/A/R).
\]
(The second equivalence follows from Discussion~\ref{discussionbox:AB}.)
\end{proof}

\begin{remarkbox}
\label{remarkbox:Deltaexp}
We record some (well-known) facts for later use.
Let $l \in S_1$.
Then the exponents of 
$l$ in $\Delta_{S/R}$ 
and in $\Delta_{S/S^{G_{\mathrm{in}}}}$ are the same, where $G_{\mathrm{in}}$ is the 
inertia group of the prime ideal $lS$ inside
$G$~\cite[Proposition~3.10.3]{BensonInvBook1993}.  
Note that $G_{\mathrm{in}} = \langle \sigma \in \calP \mid l_\sigma S= lS\rangle$.
Further, $S^{G_{\mathrm{in}}}$ is a polynomial 
ring~\cite[Theorem~5, Appendix]{LandweberStongDepth1987}.
Denote the degrees of the generators of 
$S^{G_{\mathrm{in}}}$ by $d_1, \ldots, d_n$.
Then
\[
\Delta_{S/S^{G_{\mathrm{in}}}} = l^{\sum_{i=1}^n(d_i-1)}.
\qedhere
\]
\end{remarkbox}

\begin{proposition}
\label{proposition:ramLoc}
Adopt the above notation. Then
\begin{enumerate}

\item
\label{proposition:ramLoc:SR}
$\Ramif_1(S/R) = \{l_\tau S \mid \tau \in \calP \}$.

\item
\label{proposition:ramLoc:SAR}
$\Ramif_1(S/A/R) = \{l_\tau S \mid \tau \in \calP \minus G'\}$.

\item
\label{proposition:ramLoc:AR}
$\Ramif_1(A/R) = \{\frakq \cap A \mid \frakq \in \Ramif_1(S/A/R) \}$.
\end{enumerate}
\end{proposition}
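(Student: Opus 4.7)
Parts (a) and (c) follow directly from Lemma~\ref{lemma:UnrIner} and from lying-over in integral extensions of normal domains respectively; part (b) is the substantive one.

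For (a), Lemma~\ref{lemma:UnrIner} identifies $\Ramif_1(S/R)$ with the set of height-one primes whose inertia group in $G$ is non-trivial. Given $\frakQ$ so ramified and $g \in G_{\mathrm{in}}(\frakQ)\setminus\{1\}$, the containment $(g-1)(S_1)\subseteq \frakQ\cap S_1$ combined with the fact that $g$ acts non-trivially on $S_1$ forces $\frakQ \cap S_1 = \Bbbk l$ for some $l \in S_1$; thus $\frakQ = lS$ (using that $S$ is a UFD) and $g$ is a pseudo-reflection with $l_g = l$ up to scalar. The converse is immediate. For (c), since $A \subseteq S$ is integral with $A$ normal, every height-one $\frakq\in \Spec A$ has a height-one prime of $S$ above it, and the set equality then follows from the definition of $\Ramif_1(S/A/R)$.

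For (b), the containment $\supseteq$ is direct: if $\tau \in \calP\setminus G'$ with $l_\tau S = lS$, then $(\tau-1)A \subseteq lS \cap A$, and because $G\to \Aut(F/K)$ has kernel exactly $G'$, the coset $\tau G'$ is non-trivial and lies in the inertia of $lS \cap A$ in $G/G'$; Lemma~\ref{lemma:UnrIner} applied to the Galois extension $A/R$ then yields $lS \in \Ramif_1(S/A/R)$.

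For the reverse containment $\subseteq$, I argue by contrapositive. If every $\tau \in \calP$ with $l_\tau S = lS$ lies in $G'$, the inertia groups of $lS$ in $G$ and in $G'$ coincide (both generated by the same pseudo-reflections). Remark~\ref{remarkbox:Deltaexp} then makes the exponents of $l$ in $\Delta_{S/R}$ and in $\Delta_{S/A}$ equal. Combined with transitivity of the Dedekind different in the tower $R\subseteq A\subseteq S$ ($\Delta_{S/R}\cdot S = \Delta_{S/A}\cdot\Delta_{A/R}\cdot S$ as ideals of $S$), this forces the exponent of $l$ in $\Delta_{A/R}$ to be zero, so $l\nmid\Delta_{A/R}$, and Proposition~\ref{proposition:relDDRamif} gives $lS\notin \Ramif_1(S/A/R)$. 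The main obstacle is invoking transitivity of the Dedekind different, which is standard but not explicitly proven in the excerpt; an alternative route through the tower structure of inertia groups (passing inertia of $lS\cap A$ in $G/G'$ to the image of $G_{\mathrm{in}}(lS, G)$) would rely on the residue field extension $\kappa(lS)/\kappa(lS\cap A)$ being purely inseparable, a fact indicated by Remark~\ref{remarkbox:ramif1} but developed only in the later sections.
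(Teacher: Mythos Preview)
Your proof is correct and tracks the paper's argument closely for parts~(a), (c), and the $\subseteq$ direction of~(b). Your worry about transitivity of the Dedekind different is misplaced: it is exactly Proposition~\ref{proposition:DiffsProducts} in the paper (which appears immediately after this proposition, and to which the paper's own proof makes a forward reference). One small omission in your contrapositive for $\subseteq$: you tacitly restrict to primes of the form $lS$ with $l\in S_1$, which is justified because $\Ramif_1(S/A/R)\subseteq\Ramif_1(S/R)$ and then part~(a) applies; the paper states this step explicitly.

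The one genuine difference is the $\supseteq$ direction of~(b). The paper runs it through the different again: for $\tau\in\calP\setminus G'$ the inertia groups of $l_\tau S$ in $G$ and in $G'$ differ, so Remark~\ref{remarkbox:Deltaexp} together with Proposition~\ref{proposition:DiffsProducts} gives $l_\tau\mid\Delta_{A/R}$, and Proposition~\ref{proposition:relDDRamif} finishes. Your route is more direct: you note that $\tau G'$ is a non-trivial element of the inertia group of $l_\tau S\cap A$ for the $G/G'$-action on $A$ and invoke Lemma~\ref{lemma:UnrIner} for $R\subseteq A$. This avoids the Dedekind different entirely for that direction and is a bit cleaner; the paper's version has the advantage of making both halves of~(b) formally symmetric.
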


\begin{proof}
\eqref{proposition:ramLoc:SR}:
Let $\frakq \in \Ramif_1(S/R)$.
Then the inertia group of $\frakq$ is non-trivial
(Lemma~\ref{lemma:UnrIner}), i.e.,
there exists $1\neq \tau\in G$ such that $\tau s-s\in \frakq$ for all
$s\in S$. 
Hence there exists $1 \leq i \leq n$ such that 
$\frakq=(\tau x_i-x_i)S$ and $\tau x_j-x_j\in \Bbbk \langle \tau
x_i-x_i\rangle$ for all $1 \leq j \leq n$. 
In other words, $\tau$ is a pseudo-reflection. 
Hence $\frakq = ( \tau x_i-x_i)S = l_\tau S$.
Conversely, for all $\tau \in \calP$, 
the inertia group of $\l_\tau S$ is non-trivial (e.g., 
$\tau$ belongs to the inertia group of $\l_\tau S$), 
so $\l_\tau S \in \Ramif_1(S/R)$.

\eqref{proposition:ramLoc:SAR}:
Let $\frakq \in \Ramif_1(S/A/R)$.
Note that $\Ramif_1(S/A/R)  \subseteq \Ramif_1(S/R)$.
Therefore there exists $\sigma \in \calP$ such that $\frakq = l_\sigma S$.
Moreover, $\Delta_{A/R} \in l_\sigma S$
(Proposition~\ref{proposition:relDDRamif}).
Hence the exponent of 
$l_\sigma$ in $\Delta_{S/R}$ 
is more than that in 
$\Delta_{S/A}$
(using Proposition~\ref{proposition:DiffsProducts}).
These exponents are, respectively, the degrees of $\Delta_{S/S^H}$ and 
$\Delta_{S/S^{H'}}$, where $H$ and $H'$, respectively, 
are the inertia groups of $\frakq$ inside $G$ and $G'$  
respectively;~see Remark~\ref{remarkbox:Deltaexp}.
Therefore $H \neq H'$, i.e., there exists $\tau \in \calP \minus G'$
such that $\frakq= l_\tau S$.

Conversely, let $\tau \in \calP \minus G'$. Then the inertia groups of
$l_\tau S$ inside $G$ and inside $G'$ are different, so $\Delta_{A/R} \in 
l_\tau S$ (using Proposition~\ref{proposition:DiffsProducts}), and,
consequently, $l_\tau S \cap A$ is ramified over $R$.

\eqref{proposition:ramLoc:AR}:
Follows from~\eqref{proposition:ramLoc:SAR}.
\end{proof}

\begin{proposition}
\label{proposition:DiffsProducts}
There exists a non-zero element $u\in \Bbbk$ such that $\Delta_{S/R} =
u\Delta_{A/R}\Delta_{S/A}$ in $S$. Without loss of generality, we may
assume $\Delta_{S/R} = \Delta_{A/R}\Delta_{S/A}$.
\end{proposition}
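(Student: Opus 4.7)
The plan is to deduce the identity from the \emph{transitivity} (tower formula) for the Dedekind different along the chain of noetherian normal domains $R\subseteq A\subseteq S$: as ideals of $S$,
\[
\dedekindDiff(S/R) \;=\; \dedekindDiff(S/A)\cdot\bigl(\dedekindDiff(A/R)\,S\bigr).
\]
This is a classical identity whose proof reduces, via localization at height-one primes, to the standard tower formula for Dedekind differents of DVR extensions---itself a direct consequence of the transitivity of trace, $\Trace_{L/K} = \Trace_{F/K}\circ\Trace_{L/F}$, together with duality for fractional ideals. I would give this reduction explicitly: both sides of the desired equality are principal ideals of the UFD $S$; since the Dedekind different is compatible with localization at multiplicative subsets of the base ring (via its codifferent description $\calC_{S/R} = \{x\in L : \Trace_{L/K}(xS)\subseteq R\}$), it suffices to compare the valuations of the two generators at each height-one prime $\frakq\in\Spec S$, where one is in the classical DVR situation.

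Granting the tower formula, comparing principal generators yields $\Delta_{S/R}\,S = \Delta_{A/R}\Delta_{S/A}\,S$, so $\Delta_{S/R} = u\,\Delta_{A/R}\Delta_{S/A}$ for some $u\in S^\times$. Since the three $\Delta$'s have been chosen homogeneous, the unit $u$ is homogeneous of degree zero, hence $u\in\Bbbk^\times$. Rescaling the chosen generator $\Delta_{A/R}$ by $u$ absorbs this constant and produces the normalized equality stated in the proposition.

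The main obstacle is the tower formula itself, but it is classical: in the DVR setting the inclusion $\calC_{S/A}\cdot\calC_{A/R}\subseteq \calC_{S/R}$ of codifferents is immediate from transitivity of trace, while the reverse inclusion follows from a short duality argument using the fact that all relevant fractional ideals over a DVR are principal. It is also worth noting that all non-trivial contributions come from the finitely many height-one primes $l_\tau S$ with $\tau\in\calP$ identified in Proposition~\ref{proposition:ramLoc}, so in practice only these primes need to be checked.
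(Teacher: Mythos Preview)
Your proposal is correct and follows essentially the same approach as the paper: both invoke the tower formula (transitivity) for the Dedekind different along $R\subseteq A\subseteq S$, then read off the equality of principal generators up to a unit in $\Bbbk^\times$. The paper simply cites \cite[Lemma~3.10.1]{BensonInvBook1993} for the codifferent identity $\dedekindDiff(S/R)^{-1}=\frac{1}{\Delta_{A/R}}\cdot\frac{1}{\Delta_{S/A}}$, whereas you sketch its proof via localization at height-one primes and transitivity of trace in the DVR case; the substance is the same.
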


\begin{proof} 
This follows from the proof of \cite[Lemma~3.10.1]{BensonInvBook1993},
where see that the inverse different for $R\rightarrow S$ is
\[
\dedekindDiff(S/R)^{-1}=\frac{1}{\Delta_{A/R}}\frac{1}{\Delta_{S/A}}.
\]
Hence $\dedekindDiff(S/R)$ is the principal ideal generated by
$\Delta_{A/R}\Delta_{S/A}$.  
\end{proof}

\begin{definition}
We say that the extension $R \subseteq A$ \define{splits} (or, $R$ is a
\emph{direct summand} of $A$) if there exists an $R$-module map 
$f : A \to R$ such that $f(r) = r$ for all $r \in R$.
\end{definition}

\begin{proposition}%
[\protect{\cite[Theorem~1]{BroerDirectSummandProperty2005}}]
\label{proposition:splitIffTrSurj}
$\Trace_{F/K}(A) \subseteq \Delta_{A/R }R$.
The extension $R \subseteq A$ splits if and only if equality holds.
\end{proposition}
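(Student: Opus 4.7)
The plan is to work through the inverse different (trace dual) $\mathfrak{C}_{A/R} := \{y \in F \mid \Trace_{F/K}(yA) \subseteq R\}$, a fractional ideal of $A$ in $F$ satisfying $\mathfrak{C}_{A/R} \cdot \dedekindDiff(A/R) = A$; since $A$ is a UFD, this gives $\mathfrak{C}_{A/R} = \Delta_{A/R}^{-1}A$. Before invoking any trace computation, I would establish the key technical point that $\Delta_{A/R}$ can be chosen to lie in $R$. Indeed, $\Delta_{A/R}$ is determined up to a unit of $A$, and since $A$ is graded with $A_0 = \Bbbk$ its units are $\Bbbk^{\times}$; thus the $G/G'$-action on $\dedekindDiff(A/R)$ scales $\Delta_{A/R}$ by a character $\chi : G/G' \to \Bbbk^{\times}$. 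As $G/G' \cong \mathbb{Z}/p\mathbb{Z}$ and $\charact \Bbbk = p$, the group $\Bbbk^{\times}$ has no nontrivial elements of order $p$, so $\chi$ is trivial and $\Delta_{A/R} \in R$.

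With this in hand, the inclusion is immediate: for $a \in A$, the element $\Delta_{A/R}^{-1}a$ lies in $\mathfrak{C}_{A/R}$, so $\Trace_{F/K}(\Delta_{A/R}^{-1}a) \in R$, and pulling the scalar $\Delta_{A/R}^{-1} \in K$ out of the $K$-linear trace yields $\Trace_{F/K}(a) \in \Delta_{A/R} R$. For the ``equality implies splits'' direction, I would pick $b \in A$ with $\Trace_{F/K}(b) = \Delta_{A/R}$ and define $f : A \to R$ by $f(a) := \Delta_{A/R}^{-1}\Trace_{F/K}(ba)$; the already-proved inclusion guarantees $f$ lands in $R$, and $f(r) = r\Delta_{A/R}^{-1}\Trace_{F/K}(b) = r$ for $r \in R$, so $f$ is an $R$-linear retraction.

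For the converse, given an $R$-linear splitting $f : A \to R$, tensoring with $K$ over $R$ yields a $K$-linear map $f_K : F \to K$ (using that $A \otimes_R K = F$, since $A$ is a finite torsion-free $R$-module domain with fraction field $F$). By Artin's theorem $F/K$ is Galois with group $G/G'$, hence separable, so the trace pairing is non-degenerate and $f_K(x) = \Trace_{F/K}(yx)$ for a unique $y \in F$. The condition $f(A) \subseteq R$ places $y$ in $\mathfrak{C}_{A/R}$, so $y = b\Delta_{A/R}^{-1}$ for some $b \in A$, and $f(1)=1$ forces $\Trace_{F/K}(b) = \Delta_{A/R}$, giving $\Delta_{A/R} R \subseteq \Trace_{F/K}(A)$. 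I expect the main subtlety to be the invariance claim $\Delta_{A/R} \in R$: without it one cannot manipulate $\Delta_{A/R}$ inside the $K$-linear trace, and the statement would have to be phrased in $A$ rather than $R$. The character-theoretic argument is where the modular hypothesis enters in an essential way; the rest is a formal unwinding of the trace-dual/non-degeneracy formalism.
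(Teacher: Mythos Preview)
The paper does not give its own proof of this proposition; it is quoted directly from Broer~\cite{BroerDirectSummandProperty2005} as a known result. So there is no ``paper's proof'' to compare against, and the relevant question is simply whether your argument is correct. It is.

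Two remarks. First, at the point where this proposition appears (Section~\ref{section:decin}), $G'$ is only assumed to be a normal subgroup of the $p$-group $G$; the hypothesis $G/G'\simeq\ints/p\ints$ is imposed only later, in Section~\ref{section:trans}. Your character argument is unaffected: $G/G'$ is still a $p$-group, and $\Bbbk^\times$ has no nontrivial $p$-torsion in characteristic $p$, so every homomorphism $G/G'\to\Bbbk^\times$ is trivial and $\Delta_{A/R}\in R$ follows. (To see that $\dedekindDiff(A/R)$ is $G/G'$-stable, note that each $g\in G/G'$ is a Galois automorphism of $F/K$, so $\Trace_{F/K}\circ g=\Trace_{F/K}$ and $g$ preserves the trace dual $\mathfrak{C}_{A/R}$.) Second, it is worth observing that your one-line character argument establishes $\Delta_{A/R}\in R$ in the full generality of Section~\ref{section:decin}, whereas the paper's Proposition~\ref{proposition:DedekindDiffAR} proves the same invariance by a different, more explicit route (exhibiting the height-one primes of $\dedekindDiff(A/R)$ as extended from $R$) under the extra hypotheses of Section~\ref{section:trans}. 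Your argument is more economical for the bare invariance statement; the paper's approach yields finer structural information about the primary components.
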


\section{Action by transvection groups}
\label{section:trans}

We now assume that $G$ is generated by pseudo-reflections.
Every pseudo-reflection in $G$ is a transvection, so
$\calP$ denotes the set of transvections in $G$.
Transvection groups are important in the context of the Shank-Wehlau-Broer 
conjecture, as we mentioned in Section~\ref{section:intro}.

We recall some facts about the action of $G$ on $S$.
Without loss of generality, we may assume that, for all $g \in G$,
\begin{equation}
\label{equation:uTriang}
gx_1 = x_1 \;\text{and}\;
gx_i-x_i\in \Bbbk\langle x_{1},\ldots, x_{i-1}\rangle
\;\text{for all}\;2 \leq i \leq n.
\end{equation}
This follows 
from~\cite[Proposition~4.0.2]{CampbellWehlauModularInvThy11}.
In~\cite[Section~5]{KumminiMondalPolyInv2022}, we defined a composition
series of $G$ as follows.
For $g\in G$, let
\[
\beta_g :=\max \{j \mid x_j \; \text{appears with a non-zero coefficient in}\; gx_i-x_i
\;\text{for some}\;i\}.
\]
Let $\beta_G :=\max \{\beta_g\mid g\in \mathcal{P}\}$.
Then
\begin{lemma}[\protect{\cite[Lemma~5.3]{KumminiMondalPolyInv2022}}]
\label{lemma:conjTrans}
Let $g, h \in \calP$.
\begin{enumerate}

\item 
$ghg^{-1} \in \calP$ and $\beta_{ghg^{-1}} = \beta_h$.

\item
If $\beta_g = \beta_h$ then $gh=hg$. 

\end{enumerate}
\end{lemma}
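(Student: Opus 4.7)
The plan is to exploit the upper-triangular form~\eqref{equation:uTriang} and the fact that, in this modular setting, every pseudo-reflection in $G$ is a transvection. For $\sigma\in\calP$, this means $\sigma-1$ has rank one on $S_1$, so there exist scalars $\alpha_{\sigma,i}\in\Bbbk$ with $\sigma x_i - x_i = \alpha_{\sigma,i}\, l_\sigma$ for a fixed linear form $l_\sigma$ generating the image of $\sigma-1$. From this description, $\beta_\sigma$ is simply the largest index of a variable occurring with nonzero coefficient in $l_\sigma$.

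First I would establish the following key observation: if $\sigma\in\calP$ with $\beta_\sigma = j$, then $\sigma$ fixes $x_1,\ldots,x_j$ pointwise, and $l_\sigma \in \Bbbk\langle x_1,\ldots,x_j\rangle$ with nonzero coefficient on $x_j$. Indeed, $l_\sigma$ involves $x_j$ nontrivially and no higher variable; for each $i\le j$, the condition $\sigma x_i - x_i = \alpha_{\sigma,i}\,l_\sigma \in \Bbbk\langle x_1,\ldots,x_{i-1}\rangle$ forces $\alpha_{\sigma,i}=0$ (else $x_j$ would appear on the left with $j\ge i$). This is the conceptual heart of the argument and also, arguably, the main obstacle — once one sees that $\beta_\sigma$ exactly identifies the pointwise-fixed initial segment of variables, both parts become essentially formal.

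For part~(a), I would note that $(ghg^{-1}-1) = g(h-1)g^{-1}$ has rank one on $S_1$, so $ghg^{-1}$ is a transvection (hence in $\calP$), with $l_{ghg^{-1}}$ a scalar multiple of $g\cdot l_h$. Writing $l_h = \sum_{i\le \beta_h} c_i x_i$ with $c_{\beta_h}\ne 0$ and applying $g$, each $gx_i = x_i + (\text{terms in }x_1,\ldots,x_{i-1})$, so $g\cdot l_h = c_{\beta_h}x_{\beta_h} + (\text{terms in }x_1,\ldots,x_{\beta_h-1})$. The top-degree index is preserved, giving $\beta_{ghg^{-1}} = \beta_h$.

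For part~(b), suppose $\beta_g = \beta_h = j$. By the key observation, both $g$ and $h$ act as the identity on $\Bbbk\langle x_1,\ldots,x_j\rangle$, and both $l_g$ and $l_h$ lie in this subspace. Hence $g\cdot l_h = l_h$ and $h\cdot l_g = l_g$. A direct computation then gives, for each $i$,
\[
gh\,x_i = g(x_i + \alpha_{h,i}l_h) = x_i + \alpha_{g,i}l_g + \alpha_{h,i}l_h = h(x_i + \alpha_{g,i}l_g) = hg\,x_i,
\]
so $gh = hg$, which completes the proof.
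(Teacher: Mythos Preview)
Your proof is correct. The key observation---that a transvection $\sigma$ with $\beta_\sigma=j$ fixes $x_1,\ldots,x_j$ pointwise and has $l_\sigma\in\Bbbk\langle x_1,\ldots,x_j\rangle$---is exactly the right structural fact, and your deductions of both parts from it are clean and complete.

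Note, however, that the paper does not actually supply a proof of this lemma: it is stated as a citation of \cite[Lemma~5.3]{KumminiMondalPolyInv2022}. So there is no in-paper argument to compare against. Your approach is the natural direct one and almost certainly coincides with what appears in the cited source: exploit the upper-triangular form~\eqref{equation:uTriang} to read off $\beta_\sigma$ from $l_\sigma$, then use that conjugation sends $l_h$ to $g\cdot l_h$ (preserving the top index), and that equal $\beta$-values force mutual fixing of the $l$'s.
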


As a consequence, we get a composition series of $G$.
\begin{proposition}%
[\protect{\cite[Proposition~5.4]{KumminiMondalPolyInv2022}}]
\label{proposition:compser}
$G$ has a composition series
$0  = G_0 \subsetneq G_1 \subsetneq \cdots \subsetneq G_k = G$
such that for each $1 \leq i \leq k$, 
\begin{enumerate}

\item
$G_i$ is a transvection group,

\item
$G_{i}/G_{i-1}$ is isomorphic to $\ints/p\ints$ and is generated by the
residue class of a transvection, and

\item
$\beta_{G_i} \geq \beta_{G_{i-1}}$.
\end{enumerate}
\end{proposition}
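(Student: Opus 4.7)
The plan is to construct the composition series by stratifying the transvections in $\calP$ according to the $\beta$-invariant, and then refining each stratum. For each integer $\beta\geq 0$, I would set
\[
H_\beta := \langle \tau\in \calP \mid \beta_\tau\leq \beta\rangle.
\]
Since $G=\langle\calP\rangle$, normality of $H_\beta$ in $G$ reduces to checking $g\tau g^{-1}\in H_\beta$ for $g,\tau\in\calP$ with $\beta_\tau\leq\beta$. This is exactly Lemma~\ref{lemma:conjTrans}(a): $g\tau g^{-1}\in\calP$ with $\beta_{g\tau g^{-1}}=\beta_\tau$, so it lies in $H_\beta$. Letting $\beta_1<\beta_2<\cdots<\beta_r$ denote the distinct $\beta$-values attained on $\calP$, this produces a normal chain $\{e\}=H_{\beta_0}\subsetneq H_{\beta_1}\subsetneq\cdots\subsetneq H_{\beta_r}=G$.

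Next I would show that each quotient $H_{\beta_i}/H_{\beta_{i-1}}$ is an elementary abelian $p$-group generated by images of transvections with $\beta$-value exactly $\beta_i$. Abelianness is Lemma~\ref{lemma:conjTrans}(b). For the exponent-$p$ claim, any transvection $\tau$ acts on $S_1$ as $1+N$ where $\image(N)$ is a line contained in $\ker(N)$; hence $N^2=0$ on $S_1$, and so in characteristic~$p$ one has $\tau^p=1$ on $S_1$, whence on all of $S$ by multiplicativity. Thus $H_{\beta_i}/H_{\beta_{i-1}}$ is an $\bbF_p$-vector space spanned by images of transvections.

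I would then refine each piece. Choose transvections $\tau_{i,1},\ldots,\tau_{i,d_i}\in\calP$ with $\beta_{\tau_{i,j}}=\beta_i$ whose images form an $\bbF_p$-basis of $H_{\beta_i}/H_{\beta_{i-1}}$; such a choice exists because the images of all such $\tau$ span. Define
\[
K_{i,j}:=\langle H_{\beta_{i-1}},\,\tau_{i,1},\ldots,\tau_{i,j}\rangle,\quad 0\leq j\leq d_i.
\]
Normality of $H_{\beta_{i-1}}$ in $G$ ensures $K_{i,j}$ is a subgroup; $K_{i,j}$ is a transvection group by construction; $K_{i,j}\triangleleft K_{i,j+1}$ with quotient cyclic of order $p$, generated by the class of the transvection $\tau_{i,j+1}$. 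Concatenating the refined chains $H_{\beta_{i-1}}=K_{i,0}\subsetneq\cdots\subsetneq K_{i,d_i}=H_{\beta_i}$ over $i=1,\ldots,r$ yields the required composition series $0=G_0\subsetneq G_1\subsetneq\cdots\subsetneq G_k=G$. The monotonicity $\beta_{G_j}\geq\beta_{G_{j-1}}$ is automatic, since $G_{j-1}\subseteq G_j$ gives $\calP\cap G_{j-1}\subseteq \calP\cap G_j$ and $\beta_{(\cdot)}$ is a maximum over this set.

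The main obstacle is the elementary abelian structure of $H_{\beta_i}/H_{\beta_{i-1}}$ in the middle step: commutativity comes free from Lemma~\ref{lemma:conjTrans}(b), but verifying order~$p$ for every transvection requires the concrete unipotent description above rather than the abstract hypothesis that $G$ is a $p$-group. Once this is in hand, the remainder of the argument is a book-keeping refinement of an abelian $p$-group into cyclic factors, performed so as to keep the intermediate subgroups generated by transvections.
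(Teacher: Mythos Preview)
Your argument is correct. The paper does not give its own proof of this proposition, merely citing the authors' earlier article, so there is no in-paper proof to compare against; however, your construction is precisely the one that Lemma~\ref{lemma:conjTrans} is set up to enable, and it matches the intended strategy. A couple of minor comments: the claim that $\image N\subseteq\ker N$ (equivalently $N^2=0$) does implicitly use that $G$ is a $p$-group, since this is what forces the pseudo-reflection to be unipotent rather than a genuine reflection; once $N$ is nilpotent of rank~$1$ the conclusion follows. Also, it is worth recording that $\{g\in G:\beta_g\le b\}$ is a subgroup (this follows from the identity $(gh-1)=g(h-1)+(g-1)$ and the fact that each $g$ preserves the flag $\Bbbk\langle x_1,\dots,x_j\rangle$), so that every transvection in $H_{\beta_m}$ indeed has $\beta$-value at most $\beta_m$; you use this implicitly when asserting that the quotient $H_{\beta_i}/H_{\beta_{i-1}}$ is generated by images of transvections with $\beta$-value exactly~$\beta_i$, and again for the monotonicity in~(c).
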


For the rest of the paper, we standardize the notation $l_\tau$ (where
$\tau$ is a transvection) as follows: $l_\tau$ is the unique element of
$S_1$ such that it generates the prime ideal generated by 
$\{ \tau s - s \mid s \in S\}$ and 
$x_{\beta_\tau}$ appears with coefficient $1$ in it.

Now assume that $G'$ is a normal subgroup of $G$ generated by transvections
such that $\beta_\sigma \geq \beta_{G'}$ for all $\sigma \in \calP \minus
G'$. 
For example, $G' = G_i$ for some $1 \leq i < k$ in the above composition
series.
For $s\in S$, let
\[
\Pi_{G'}s := \prod_{s' \in G's} s'
\]
where $G's$ is the $G'$-orbit $\{gs \mid g \in G \}$ of $s$.
Note that if the stabilizer of $s$ in $G'$ is non-trivial, then 
$\Pi_{G'}s  \neq \prod_{g \in G'} gs$.

We now show that $\Delta_{A/R}$ is $G$-invariant. We start with a lemma.
\begin{lemma}
\label{lemma:minOverRamif}
Let $\frakq \in \Ramif_1(A/R)$.
Then $\frakq=Ar$ for some $r\in R$ which is product of linear terms in
$S_1$.
\end{lemma}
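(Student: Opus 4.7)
The plan is to construct $r$ as a $G$-invariant product of linear forms that generates $\frakq$ in $A$. First, by Proposition~\ref{proposition:ramLoc}(c) and (b), I would write $\frakq = l_\tau S \cap A$ for some transvection $\tau \in \calP \minus G'$; the height-one primes of $S$ lying over $\frakq$ are then precisely the ideals $l_{g'\tau g'^{-1}}S$ for $g' \in G'$, i.e., they form a single $G'$-orbit.

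Then I would set $r := \prod_{l \in G \cdot l_\tau} l \in S$, the product over the $G$-orbit of $l_\tau$ in $S_1$. Because the upper-triangular convention~\eqref{equation:uTriang} forces $x_{\beta_\tau}$ to appear with coefficient $1$ in every element of $G \cdot l_\tau$, distinct $G$-translates are non-proportional and generate distinct height-one primes of $S$, so $r$ is tautologically a product of linear forms. Since $G$ permutes the factors, $gr = \chi(g) r$ for some character $\chi\colon G \to \Bbbk^\times$; but $G$ is a finite $p$-group and the finite subgroups of $\Bbbk^\times$ have order coprime to $p$, so $\chi$ is trivial and $r \in R$.

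The key step is to verify that the $G$-orbit of $l_\tau$ coincides with its $G'$-orbit, whence $r$ is (up to a scalar) equal to $\Pi_{G'} l_\tau$ and $rS = \prod_{\frakQ} \frakQ$ ranges exactly over the primes of $S$ lying over $\frakq$, each with exponent one. For this I would exploit the hypothesis $\beta_\sigma \geq \beta_{G'}$ for $\sigma \in \calP \minus G'$: any such $\sigma$ with $\beta_\sigma \geq \beta_\tau$ fixes $x_1, \ldots, x_{\beta_\tau}$ and therefore fixes $l_\tau$, so its class in $G/G'$ is already represented by an element of the stabilizer of $l_\tau$ in $G$; the remaining transvections can be adjusted by suitable elements of $G'$, using the identity $\sigma l_\tau - l_\tau \in \Bbbk l_\sigma$ for transvections $\sigma$ together with the composition series of Proposition~\ref{proposition:compser}, so as to land in this stabilizer.

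Once the orbits agree, the equality $rA = \frakq$ follows from a local valuation computation. Every minimal prime of $rA$ in $\Spec A$ contracts from a minimal prime of $rS$ in $\Spec S$, all of which lie over $\frakq$, so $\sqrt{rA} = \frakq$. At any prime $\frakQ$ of $S$ lying over $\frakq$, the identity $v_{\frakQ}(r) = e(\frakQ/\frakq) \cdot v_\frakq(r)$ together with $v_{\frakQ}(r) = 1$ forces both $e(\frakQ/\frakq) = 1$ and $v_\frakq(r) = 1$. Since $A$ is a UFD, $r$ then differs from an irreducible generator of $\frakq$ by a unit, and $rA = \frakq$. The main obstacle is the orbit-coincidence argument, which requires careful case analysis of the $\beta$-invariants in $\calP \minus G'$.
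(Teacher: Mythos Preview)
Your plan is essentially the paper's proof with extra scaffolding. The paper sets $r=\Pi_{G'}l_\tau$ directly (rather than the $G$-orbit product) and argues $G$-invariance in one line: for $\sigma\in\calP\setminus G'$ it asserts $gl_\tau\in\Bbbk\langle x_1,\ldots,x_{\beta_\sigma-1}\rangle$, a span that $\sigma$ fixes, whence $\sigma(gl_\tau)=gl_\tau$ for every $g\in G'$. That single observation replaces your $G$-orbit construction, the character argument, and the orbit-coincidence step all at once. The equality $\frakq=(\Pi_{G'}l_\tau)A$ is then read off from the fact that $\{gl_\tau S:g\in G'\}$ is exactly the set of primes of $S$ lying over $\frakq$; no separate valuation computation is carried out.

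The one place your outline is genuinely incomplete is the case $\beta_\sigma<\beta_\tau$. Saying the remaining transvections ``can be adjusted by suitable elements of $G'$'' is not yet an argument: you would need, for each such $\sigma$, some $g'\in G'$ with $g'l_\tau=\sigma l_\tau$, and neither the hypothesis $\beta_\sigma\ge\beta_{G'}$ nor the composition series of Proposition~\ref{proposition:compser} obviously produces such a $g'$. The paper does not split off this case; its displayed inclusion $gl_\tau\in\Bbbk\langle x_1,\ldots,x_{\beta_\sigma-1}\rangle$ is precisely the assertion $\beta_\tau<\beta_\sigma$. So your case analysis is flagging the same inequality the paper's one-liner leans on, not a detour in your strategy, but as written it is not a proof.
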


\begin{proof}
Let $\tilde \frakq\in \Spec(S)$ be lying over $\frakq$. 
Then $\tilde \frakq \in \Ramif_1(S/A/R)$.
By Proposition~\ref{proposition:ramLoc}, there exists $\tau \in \calP
\minus G'$ such that $\tilde \frakq = l_\tau S$.

We know that $\{gl_\tau S \mid g\in G'\}$ is the set of all prime ideals of
$S$ lying over $\frakq$.
Hence $\frakq = (\Pi_{G'} l_\tau) A$.
We now argue that $\Pi_{G'} l_\tau$ is $G$-invariant.
Let $\sigma \in \calP$; we want to show that 
$\sigma \Pi_{G'} l_\tau = \Pi_{G'} l_\tau$.
Without loss of generality, $\sigma \not \in G'$.
By the definition of $G'$, $\beta_\sigma \geq \beta_{G'}$, so 
$g l_\tau \in \Bbbk \langle x_1, \ldots, x_{\beta_\sigma-1} \rangle$, and,
therefore, $\sigma g l_\tau =  g l_\tau$
for all $g \in G'$.
\end{proof}

\begin{proposition}
\label{proposition:DedekindDiffAR}
$\Delta_{A/R}$ is $G$-invariant.
\end{proposition}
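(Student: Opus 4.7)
The strategy is to factor $\Delta_{A/R}$ into primes in the UFD $A$, match each prime factor to a height-one ramified prime of $A$ over $R$, and then quote Lemma~\ref{lemma:minOverRamif}, whose proof already produced an explicitly $G$-invariant generator for each such prime.

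In detail, I would first identify the prime divisors of $\Delta_{A/R}$. If $\dedekindDiff(A/R) = A$ there is nothing to prove, so assume it is a proper ideal. By Discussion~\ref{discussionbox:AB}(c), every associated prime of $\dedekindDiff(A/R)$ has height one; by items (a) and (d) of the same discussion, the height-one primes containing $\dedekindDiff(A/R)$ are exactly the members of $\Ramif_1(A/R)$. Since $A$ is a UFD~\cite[3.8.1]{CampbellWehlauModularInvThy11}, these observations combine to give a factorization
\[
\Delta_{A/R} = c\prod_{i}\pi_i^{e_i}
\]
with $c\in\Bbbk^\times$, positive integers $e_i$, and prime elements $\pi_i \in A$ satisfying $(\pi_i)\in \Ramif_1(A/R)$.

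Second, I would invoke Lemma~\ref{lemma:minOverRamif}: each $(\pi_i)$ equals $(\Pi_{G'}l_{\tau_i})A$ for some $\tau_i \in \calP\minus G'$, and the proof of that lemma established that $\Pi_{G'}l_{\tau_i}$ is fixed by all of $G$. Thus $\pi_i$ agrees with $\Pi_{G'}l_{\tau_i}$ up to a unit of $A$, which is a scalar in $\Bbbk^\times$, and substituting back displays $\Delta_{A/R}$ as a scalar times a product of elements of $R$. In particular $\Delta_{A/R}$ is $G$-invariant.

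I do not foresee a serious obstacle: the only point requiring attention is bookkeeping the scalar units when replacing an abstract prime $\pi_i$ by the concrete generator $\Pi_{G'}l_{\tau_i}$. As a conceptual alternative one could argue that $\dedekindDiff(A/R)$ is a $G$-stable ideal (the Dedekind different is canonically associated with the $R$-algebra $A$), deduce $g\Delta_{A/R} = c_g\Delta_{A/R}$ for a character $c\colon G\to\Bbbk^\times$, and note that a $p$-group has no nontrivial character into $\Bbbk^\times$ in characteristic $p$ since $x^{p^m}-1 = (x-1)^{p^m}$; but the factorization approach is more direct given the apparatus already developed.
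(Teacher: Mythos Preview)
Your proposal is correct and follows essentially the same route as the paper: identify the height-one primes over $\dedekindDiff(A/R)$ as elements of $\Ramif_1(A/R)$ via Discussion~\ref{discussionbox:AB}, apply Lemma~\ref{lemma:minOverRamif} to obtain $G$-invariant generators for each, and conclude that $\Delta_{A/R}$ is (up to a scalar) a product of elements of $R$. The only cosmetic difference is that the paper phrases this as ``we may take $\Delta_{A/R} = r_1^{n_1}\cdots r_t^{n_t}$'' rather than tracking the scalar unit explicitly; your alternative argument via the triviality of $p$-group characters in characteristic $p$ is a valid shortcut the paper does not pursue.
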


\begin{proof}
Let $\frakq_1, \ldots, \frakq_t$ be the minimal prime ideals over 
$\Delta_{A/R}$.
Then $\frakq_i \in \Ramif_1(A/R)$ for all $i$.
By Lemma~\ref{lemma:minOverRamif}, there exist
homogeneous $r_1, \ldots, r_t \in R$ such that $\frakq_i = Ar_i$ for every
$i$. 
By Discussion~\ref{discussionbox:AB}, $\dedekindDiff(A/R)$ an unmixed
height-$1$ ideal contained inside $\displaystyle \bigcap_{i=1}^t \frakq_i$.
Hence $\dedekindDiff(A/R) = Ar_1^{n_1 }\cdots r_t^{n_t }$ 
for some positive integers $n_i$.
We may take $\Delta_{A/R} = r_1^{n_1 }\cdots r_t^{n_t }$.
\end{proof}

\begin{remarkbox}
\label{remarkbox:ramif1}
As another corollary of Lemma~\ref{lemma:minOverRamif}, we get the
following. 
Let $\frakq \in \Ramif_1(A/R)$; write $\frakp = \frakq \cap R$.
Then $\frakq = \frakp A$.
Therefore $\kappa(\frakq)$ is an inseparable extension of
$\kappa(\frakp)$.
This is unlike the non-modular situation:
If $\frakQ \in \Ramif_1(S/R)$, then, with $\frakP := \frakQ \cap R$, the
degree $[\kappa(\frakQ) : \kappa(\frakP)]$ divides $|G|$
(see, e.g., \cite[\S I.7, Corollary]{SerreLocalFieldsGTM1979}),
so it is a unit in $\Bbbk$, and, consequently, the extension 
$\kappa(\frakQ) / \kappa(\frakP)$ is separable.
Therefore $\frakP S_\frakQ \neq \frakQ S_\frakQ$.
\end{remarkbox}

Hereafter we further assume that $G'= G_{k-1}$ in the notation of
Proposition~\ref{proposition:compser}. Hence $G'$ is generated by
transvections and is normal in $G$. Further, $G/G' \simeq \ints/p\ints$,
generated by a transvection $\sigma \in G \minus G'$. 

\begin{lemma} 
\label{lemma:minDegElt}
Let $a\in A\minus R$ be a homogeneous element of smallest degree. Then 
\begin{enumerate}

\item
\label{lemma:minDegElt:inR}
$\sigma a-a\in R$.

\item
\label{lemma:minDegElt:TrSmalli}
$\Trace_{F/K}(a^k ) = 0$ for every $0 \leq k < p-1$;

\item
\label{lemma:minDegElt:TrpMinusOne}
$\Trace_{F/K}(a^{p-1 }) = -(\sigma a - a)^{p-1}$.
\end{enumerate}
\end{lemma}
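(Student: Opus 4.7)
The plan is to prove (a) using a canonical $\Bbbk$-derivation $D$ of $S$ that commutes with the action of $G'$, and then deduce (b) and (c) by a direct trace computation. Define $D \colon S \to S$ to be the $\Bbbk$-linear derivation with $D(x_i) = \alpha_i$, where $\sigma x_i - x_i = \alpha_i l_\sigma$. Since $\sigma l_\sigma = l_\sigma$ (a consequence of $\sigma$ being a transvection of order $p$), one has $D(l_\sigma) = 0$, and a divided-power Taylor expansion gives
\[
\sigma f = \sum_{k \ge 0} l_\sigma^k D^{[k]}(f), \qquad f \in S,
\]
where $D^{[k]}$ is the $k$-th divided power of $D$. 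The upper-triangular form of the action~\eqref{equation:uTriang} combined with the standardization of $l_\sigma$ at $x_{\beta_\sigma}$ forces $\alpha_j = 0$ for $j \le \beta_\sigma$. For any transvection $\tau \in G'$ we have $l_\tau \in \Bbbk\langle x_1, \ldots, x_{\beta_\tau}\rangle$ with $\beta_\tau \le \beta_{G'} \le \beta_\sigma$, hence $D(l_\tau) = 0$ and consequently $\tau D = D\tau$ (both are derivations of $S$ agreeing on the generators $x_i$). The functoriality of the divided-power exponential $\exp_{\mathrm{div}}(tD) \colon x_i \mapsto x_i + t\alpha_i$ then upgrades this to $\tau D^{[k]} = D^{[k]} \tau$ for every $k \ge 0$; since $G'$ is generated by transvections, each $D^{[k]}$ preserves $A = S^{G'}$.

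Write $d = \deg a$. For each $k \ge 1$, $D^{[k]} a$ is a homogeneous element of $A$ of degree $d - k < d$, so by the minimality of $d$ it lies in $R_{d-k} \subseteq R$. Using $\sigma l_\sigma = l_\sigma$,
\[
(\sigma - 1)^2 a = \sum_{k \ge 1} (\sigma - 1)\bigl(l_\sigma^k D^{[k]} a\bigr) = \sum_{k \ge 1} l_\sigma^k (\sigma - 1)(D^{[k]} a) = 0,
\]
and so $\sigma a - a$ is $\sigma$-fixed. Since $\sigma a - a \in A$ and $A^\sigma = S^{\langle G', \sigma\rangle} = R$, this proves~(a). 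Setting $b = \sigma a - a \in R$, an induction gives $\sigma^i a = a + ib$ for $0 \le i \le p-1$; expanding the trace yields
\[
\Trace_{F/K}(a^k) = \sum_{i=0}^{p-1}(a + ib)^k = \sum_{j=0}^{k} \binom{k}{j} a^{k-j} b^j \Bigl(\sum_{i=0}^{p-1} i^j\Bigr).
\]
The classical identities $\sum_{i=0}^{p-1} i^j \equiv 0 \pmod p$ for $0 \le j \le p - 2$ and $\sum_{i=0}^{p-1} i^{p-1} \equiv -1 \pmod p$ then give (b) and (c).

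The main obstacle is verifying that every divided-power operator $D^{[k]}$, and not merely $D = D^{[1]}$, commutes with $G'$: for $k < p$ this follows at once from $D^{[k]} = D^k/k!$, but for $k \ge p$ one must appeal to the functoriality of the divided-power exponential, since the naive identity $\sigma = \exp(l_\sigma D)$ is not literally valid in characteristic~$p$.
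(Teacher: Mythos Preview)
Your proof is correct, but the route you take for part~(a) is genuinely different from the paper's.

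The paper proves~(a) via divisibility: it first observes (as you do) that $r:=\sigma a-a\in A$ because $G'$ is normal in $G$, then notes that $l_\sigma\mid r$ in $S$ and hence $\Pi_{G'}l_\sigma\mid r$ in $A$. The key input is Lemma~\ref{lemma:minOverRamif}, which shows $\Pi_{G'}l_\sigma\in R$. The quotient $r/\Pi_{G'}l_\sigma$ then lies in $A$ with degree strictly less than $\deg a$, so by minimality it lies in $R$, whence $r\in R$. Your argument bypasses Lemma~\ref{lemma:minOverRamif} entirely: instead you build the Hasse--Schmidt family $\phi_t\colon x_i\mapsto x_i+t\alpha_i$, check that it commutes with every transvection $\tau\in G'$ (because $\phi_t(l_\tau)=l_\tau$), and deduce that each $D^{[k]}$ preserves $A$. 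Minimality then forces $D^{[k]}a\in R$ for $k\ge 1$, and the Taylor expansion $\sigma a-a=\sum_{k\ge1}l_\sigma^k D^{[k]}a$ together with $\sigma l_\sigma=l_\sigma$ gives $(\sigma-1)^2a=0$ directly. Your approach is more self-contained and yields the extra information that each graded piece $D^{[k]}a$ of the Taylor expansion already lies in $R$; the paper's is shorter and ties the result to the orbit product $\Pi_{G'}l_\sigma$, which is the natural object controlling ramification elsewhere in the paper.

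For~(b) and~(c) the two arguments are essentially the same computation packaged differently. The paper expands $\sum_{i}(\sigma^i a-a)^k$ binomially, obtains a triangular system relating these sums to $\Trace_{F/K}(a^j)$, and evaluates the left-hand side via~\cite[Proposition~4.3]{ShankWehlauTransferModular1999} (which amounts to the power-sum identities over $\mathbb{F}_p$). You instead expand $\Trace_{F/K}(a^k)=\sum_i(a+ib)^k$ directly and invoke the classical congruences $\sum_{i=0}^{p-1}i^j\equiv 0$ for $0\le j\le p-2$ and $\equiv -1$ for $j=p-1$; this avoids the triangular inversion and the external reference.
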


\begin{proof}
Write $r = \sigma a - a$.
\eqref{lemma:minDegElt:inR}:
Let $\tau\in G'$. 
Since $G'$ is a normal subgroup of $G$, 
$\tau(\sigma a)=\sigma a$,
i.e. $\sigma a\in A$. Hence $r\in A$.

Let $l_\sigma \in S_1 $ be a linear form that defines the reflecting
hyperplane of $\sigma$.
Then $l_\sigma$ divides $r$ in $S$.
Hence $\Pi_{G'}l_\sigma$ divides $r$ in $A$.
By minimality of degree of $a$, we must have that
$r/\Pi_{G'}l_\sigma\in R$.
This implies that $r\in R$, since $\Pi_{G'}l_\sigma\in R$.
This proves~\eqref{lemma:minDegElt:inR}.

Before proving 
\eqref{lemma:minDegElt:TrSmalli} and
\eqref{lemma:minDegElt:TrpMinusOne}, 
we note that, for all $k \geq 0$,
\begin{equation}
\begin{aligned}
\label{equation:eqnTrace1}
\sum_{i=0}^{p-1}(\sigma^i(a)-a)^{k} & =
\sum_{i=0}^{p-1}\sum_{j=0}^{k}(-1)^j \binom{k}{j} \sigma^i(a)^ja^{k-j}
= 
\sum_{j=0}^{k}(-1)^j{\binom{k }{j }}a^{k-j}\sum_{i=0}^{p-1} \sigma^i(a^j)
\\
& =\sum_{j=0}^{k}(-1)^j\binom{k}{j}a^{k-j} \Trace_{F/K }(a^j).
\end{aligned}
\end{equation}

\eqref{lemma:minDegElt:TrSmalli}: 
We proceed by induction on $k$. If $k=0$, the assertion is clear.
Let $0 < k < p-1$.
(The induction step is required only if $p>2$.)
Assume that the assertion is true for all $0 \leq j < k$.
Then it follows from~\eqref{equation:eqnTrace1} that
\[
\sum_{i=0}^{p-1}(\sigma^i(a)-a)^{k} 
=(-1)^k \Trace_{F/K }(a^k).
\]
It is easy to check that $\sigma^i a - a = ir$ for all $0 \leq i < p$.
Therefore 
$\{ga - a \mid g \in G \}=\{\sigma^i a - a \mid 0 \leq i < p\}$
is a one-dimensional $\bbF_p$-vector space.
We apply~\cite[Proposition~4.3]{ShankWehlauTransferModular1999}
to this vector space to see that
$\sum_{i=0}^{p-1}(\sigma^i(a)-a)^{k}=0$. 
This proves~\eqref{lemma:minDegElt:TrSmalli}.

\eqref{lemma:minDegElt:TrpMinusOne}:
We use~\eqref{equation:eqnTrace1}, \eqref{lemma:minDegElt:TrSmalli} 
and~\cite[Proposition~4.3]{ShankWehlauTransferModular1999}
to
see that
\[
\Trace_{F/K }(a^{p-1})=\sum_{i=0}^{p-1}(\sigma^i(a)-a)^{p-1}
= \prod_{i=1}^{p-1} ir.
\]
Now observe that $(p-1)!  = -1 \in \bbF_p$.
\end{proof}

Proposition~\ref{proposition:splitIffTrSurj} 
implies that $\deg \Delta_{A/R} \leq (p-1)\deg a$; we see show below that
equality characterizes the splitting of the extension $R \to A$.

\begin{proposition}
\label{proposition:eltWithNZTr}
Let $a \in A \minus R$ be a homogeneous element of smallest degree. 
Then the following are equivalent:
\begin{enumerate}

\item
\label{proposition:eltWithNZTr:degDelta}
$\deg \Delta_{A/R} = (p-1)\deg a$.

\item
\label{proposition:eltWithNZTr:Delta}
$\Delta_{A/R} = \lambda \Trace_{F/K}(a^{p-1})$ for some $\lambda \in
\Bbbk^\times$.

\item
\label{proposition:eltWithNZTr:splits}
$R$ is a direct summand of $A$.

\end{enumerate}
\end{proposition}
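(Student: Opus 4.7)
The plan is to prove the three implications $\mathrm{(a)} \Rightarrow \mathrm{(b)} \Rightarrow \mathrm{(c)} \Rightarrow \mathrm{(a)}$.

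For $\mathrm{(a)} \Leftrightarrow \mathrm{(b)}$, I would combine Lemma~\ref{lemma:minDegElt}(c), which gives $\Trace_{F/K}(a^{p-1}) = -r^{p-1}$ with $r := \sigma a - a \in R$ homogeneous of degree $\deg a$, and Proposition~\ref{proposition:splitIffTrSurj}, which forces $-r^{p-1} \in \Delta_{A/R} R$. Writing $-r^{p-1} = \Delta_{A/R}\cdot q$ for some homogeneous $q \in R$, the identity $(p-1)\deg a = \deg \Delta_{A/R} + \deg q$ shows that $\mathrm{(a)}$ is equivalent to $\deg q = 0$, i.e., $q \in \Bbbk^{\times}$; rearranging then yields $\mathrm{(b)}$.

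For $\mathrm{(b)} \Rightarrow \mathrm{(c)}$, the $R$-linear map $\Trace_{F/K} \colon A \to R$ has as its image an ideal of $R$ contained in $\Delta_{A/R} R$ by Proposition~\ref{proposition:splitIffTrSurj}. Hypothesis $\mathrm{(b)}$ places $\Delta_{A/R}$ itself inside this image, forcing the reverse containment; equality of the two ideals then gives the splitting by the same Proposition~\ref{proposition:splitIffTrSurj}.

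For $\mathrm{(c)} \Rightarrow \mathrm{(a)}$, the hard direction, I would exploit the $K$-basis $\{1, a, a^2, \ldots, a^{p-1}\}$ of $F = K(a)$ (a basis because $[F:K] = p$ is prime and $a \notin K$). For $b = \sum_{j=0}^{p-1} k_j a^j$ with $k_j \in K$, a computation using $\sigma(a) = a + r$ and Lemma~\ref{lemma:minDegElt}(b),(c) yields $\Trace_{F/K}(b) = -k_{p-1}\, r^{p-1}$. Under $\mathrm{(c)}$ and Proposition~\ref{proposition:splitIffTrSurj}, there exists a homogeneous $b \in A$ with $\Trace_{F/K}(b) = \Delta_{A/R}$, so $\Delta_{A/R} = -k_{p-1}\, r^{p-1}$ in $K$. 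The main obstacle is to verify $k_{p-1} \in \Bbbk^{\times}$, or equivalently that no element of $A$ of degree strictly less than $(p-1)\deg a$ has nonzero trace. I expect to derive this bound from the minimality of $\deg a$ in $A \setminus R$, together with the explicit description of $\Delta_{A/R}$ as a product of orbit forms $\Pi_{G'} l_{\tau_i}$ (Proposition~\ref{proposition:DedekindDiffAR}); the cleanest route would be to establish $A = R[a]$ as graded $R$-algebras, so that any $b \in A$ has $R$-coefficients in the expansion $b = \sum r_j a^j$, making the required degree bound immediate.
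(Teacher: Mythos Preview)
Your arguments for $\mathrm{(a)}\Leftrightarrow\mathrm{(b)}$ and $\mathrm{(b)}\Rightarrow\mathrm{(c)}$ are correct and match the paper's proof essentially verbatim.

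The gap is in $\mathrm{(c)}\Rightarrow\mathrm{(a)}$. You correctly isolate the obstacle: once you write $b=\sum_{j=0}^{p-1}k_ja^j$ over $K$ and compute $\Trace_{F/K}(b)=-k_{p-1}r^{p-1}$, you need $k_{p-1}\in R$ (not merely $K$) to run the degree comparison, and this comes down to knowing $A=R[a]$ as a graded $R$-algebra. But neither the minimality of $\deg a$ nor Proposition~\ref{proposition:DedekindDiffAR} (which only records that $\Delta_{A/R}$ is $G$-invariant and a product of elements of $R$) gives you this. Minimality controls $A$ only in degrees below $\deg a$; it says nothing about whether $A_d\subseteq \sum_j R_{d-j\deg a}\,a^j$ for larger $d$, which is exactly what you need.

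The paper closes this gap by invoking an external structure theorem: under the direct-summand hypothesis $\mathrm{(c)}$, \cite[Theorem~4(i)]{BroerHypersurfaces2006} provides a homogeneous $a_1\in A\setminus R$ with $A=R[a_1]\simeq R[T]/(T^p-\delta T-r_0)$ and $\delta/\Delta_{A/R}\in\Bbbk^\times$; one may take $a_1$ of minimal degree, and then Lemma~\ref{lemma:minDegElt} gives $\Trace_{F/K}(a_1^{p-1})=-r_1^{p-1}$ with $r_1^{p-1}/\Delta_{A/R}\in\Bbbk^\times$. This is a genuine input you are missing: the freeness of $A$ over $R$ (equivalently $A=R[a]$) is not a formal consequence of the setup but a consequence of the splitting, and Broer's theorem is what supplies it.
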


\begin{proof}

\eqref{proposition:eltWithNZTr:degDelta}
$\implies$
\eqref{proposition:eltWithNZTr:Delta}: Note that
$\Trace_{F/K}(a^{p-1})$ is a nonzero element 
of $\Delta_{A/R }R$ 
(Proposition~\ref{proposition:splitIffTrSurj} and
Lemma~\ref{lemma:minDegElt}).
By the hypothesis, 
$\Trace_{F/K}(a^{p-1})$  and $\Delta_{A/R }$
have the same degree, so such a $\lambda$ exists.

\eqref{proposition:eltWithNZTr:Delta}
$\implies$
\eqref{proposition:eltWithNZTr:degDelta}: Immediate.

\eqref{proposition:eltWithNZTr:Delta}
$\implies$
\eqref{proposition:eltWithNZTr:splits}:
It follows that $\Delta_{A/R}R \subseteq \Trace_{F/K}(A)$.
However, in general, $\Trace_{F/K}(A) \subseteq \Delta_{A/R}R$
(Proposition~\ref{proposition:splitIffTrSurj}).
Hence the trace map $\Trace_{F/K} : A \to \Delta_{A/R}R$ is surjective,
and, consequently,
$R$ is a direct summand of $A$ 
(again, Proposition~\ref{proposition:splitIffTrSurj}).

\eqref{proposition:eltWithNZTr:splits}
$\implies$
\eqref{proposition:eltWithNZTr:Delta}:
By~\cite[Theorem~4(i)]{BroerHypersurfaces2006}, there exists
$a_1 \in A \minus R$ such that $A = R[a_1] \simeq 
R[T]/(T^p-\delta T -r_0 )$ where 
$\frac{\delta}{\Delta_{A/R}} \in \Bbbk^\times$ and
$r_0 = \prod_{i=0}^{p-1} \sigma^i a_1$.
Without loss generality, $a_1$
is homogeneous and $\deg a_1 = \min \{ j \mid A_j \neq R_j \}$.
Write $r = (\sigma-1)a$ and $r_1 = (\sigma-1)a_1$.
By Lemma~\ref{lemma:minDegElt}, $r \in R$,
$r_1 \in R$, $\Trace_{F/K}(a^{p-1}) = r^{p-1}$
and $\Trace_{F/K}(a_1^{p-1}) = r_1^{p-1}$.
It follows from the proof of~\cite[Theorem~4(i)]{BroerHypersurfaces2006},
that 
\[
\frac{\Trace_{F/K}(a_1^{p-1})}{\Delta_{A/R}} \in \Bbbk^\times
\]
Hence $\deg \Trace_{F/K}(a^{p-1}) = \deg \Delta_{A/R}$; therefore 
\[
\frac{\Trace_{F/K}(a^{p-1})}{\Delta_{A/R}} \in \Bbbk^\times.
\qedhere
\]
\end{proof}

\section{A special class of groups}
\label{Section:TheoryofLargerBeta}

Throughout this section, we assume that $G$ is such that 
$\beta_{\sigma} > \beta_{G'}$. 
In Proposition~\ref{proposition:BetaSigmaGreater}, we prove two equivalent
expressions for $\Delta_{A/R}$.

\begin{lemma}[\protect{\cite[Lemma~6.3]{KumminiMondalPolyInv2022}}]
\label{lemma:sigmaFixes}
We may assume that $\sigma x_i = x_i$ for all $i\neq n$.
\end{lemma}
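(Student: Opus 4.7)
The plan is to change basis on $V := S_1$ so that $\sigma$ acts as the identity on all but one of the new coordinates, while the upper triangular condition~\eqref{equation:uTriang} remains valid for every element of $G$. Write $\sigma v - v = \phi(v)\, l_\sigma$ for a linear functional $\phi : V \to \Bbbk$, so that $V^\sigma := \ker \phi$ is the fixed hyperplane of the transvection $\sigma$. Setting $\sigma x_j = x_j + c_j l_\sigma$ and applying~\eqref{equation:uTriang}, one sees that $c_j = 0$ whenever $j \leq \beta_\sigma$ (otherwise $l_\sigma$ would contribute its nonzero $x_{\beta_\sigma}$ coefficient to $\sigma x_j - x_j \in \Bbbk\langle x_1, \ldots, x_{j-1}\rangle$), so $\phi$ annihilates $\Bbbk\langle x_1, \ldots, x_{\beta_\sigma}\rangle$.

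The key step is to show that $V^\sigma$ is $G$-invariant. Since $G/G' \simeq \ints/p\ints$ is generated by the class of $\sigma$, one has $G = \langle \sigma, G'\rangle$, and $V^\sigma$ is trivially $\sigma$-stable. For any $\tau \in G'$, the hypothesis $\beta_\tau \leq \beta_{G'} < \beta_\sigma$ gives $(\tau - I) x_j \in \Bbbk\langle x_1, \ldots, x_{\beta_{G'}}\rangle \subseteq \ker \phi$ for every $j$; hence $\phi \circ \tau = \phi$ and $\tau(V^\sigma) = V^\sigma$.

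With $V^\sigma$ a $G$-invariant hyperplane, I would invoke the standard fact that a finite $p$-group acting on a finite dimensional $\Bbbk$-vector space in characteristic $p$ has a nonzero fixed vector, applied inductively inside $V^\sigma$, to produce a $G$-stable complete flag $0 \subset W_1 \subset \cdots \subset W_{n-1} = V^\sigma \subset W_n = V$. Choosing $y_i \in W_i \minus W_{i-1}$ for each $i$ yields a basis of $V$ in which every $g \in G$ is upper triangular (by $G$-invariance of the flag) with unit diagonal (by unipotence of $g$); thus $gy_i - y_i \in \Bbbk\langle y_1, \ldots, y_{i-1}\rangle$, so~\eqref{equation:uTriang} holds in the new basis. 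Since $y_1, \ldots, y_{n-1} \in V^\sigma$ by construction, $\sigma y_i = y_i$ for all $i \neq n$, as required.

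The main obstacle is the $G$-invariance of $V^\sigma$: this is the one place where the hypothesis $\beta_\sigma > \beta_{G'}$ of this section is essentially used, as it forces the image of $\sigma - I$ to lie `above' every coordinate direction that an element of $G'$ can mix into any $x_j$, giving the crucial containment $(\tau - I)V \subseteq \ker \phi$.
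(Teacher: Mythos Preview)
The paper does not supply its own proof of this lemma; it simply cites \cite[Lemma~6.3]{KumminiMondalPolyInv2022}. So there is nothing to compare against directly, and I will just assess your argument on its own.

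Your proof is essentially correct but has one genuine gap. The change of basis you perform must preserve \emph{all} the standing hypotheses of Section~\ref{Section:TheoryofLargerBeta}, in particular $\beta_\sigma > \beta_{G'}$, which the paper continues to use afterwards (e.g.\ the line ``$gx_n - x_n \in \Bbbk\langle x_1,\ldots,x_{\beta_\sigma-1}\rangle$'' in the proof of Proposition~\ref{proposition:largerBetaTransvection}). Your construction of an \emph{arbitrary} $G$-stable flag inside $V^\sigma$ can destroy this inequality. Concretely, take $n=5$, $\sigma\colon x_4\mapsto x_4+x_2,\ x_5\mapsto x_5+x_2$ (others fixed), and $G'=\langle\tau\rangle$ with $\tau\colon x_3\mapsto x_3+x_1$ (others fixed), so $\beta_\sigma=2>1=\beta_{G'}$. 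The choice $y_1=x_4-x_5$, $y_2=x_2$, $y_3=x_1$, $y_4=x_3$, $y_5=x_5$ gives a perfectly good $G$-stable flag with $\sigma y_i=y_i$ for $i<5$, yet in this basis $\beta_\sigma=2$ while $\beta_\tau=3$, so $\beta_\sigma<\beta_{G'}$.

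The fix is immediate and fits your framework: rather than building the flag in $V^\sigma$ from scratch, \emph{extend} the existing $G$-stable partial flag
\[
0 \subset \Bbbk\langle x_1\rangle \subset \cdots \subset \Bbbk\langle x_1,\ldots,x_{\beta_\sigma}\rangle \subset V^\sigma
\]
(you already showed the last inclusion holds) to a full $G$-stable flag of $V^\sigma$. Then $y_i=x_i$ for $i\le\beta_\sigma$, and since for every transvection $g\in G'$ and for $\sigma$ the image of $g-1$ lies in $\Bbbk\langle x_1,\ldots,x_{\beta_\sigma}\rangle=\Bbbk\langle y_1,\ldots,y_{\beta_\sigma}\rangle$, the values $\beta_\sigma$ and $\beta_{G'}$ are unchanged in the new basis.
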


\begin{proposition}
\label{proposition:largerBetaTransvection}
Let $\tau \in G \minus G'$.
Then $\tau$ is a transvection if and only if
$\tau = \sigma^k g$ for some $1 \leq k \leq p-1$ and a transvection $g \in
G'$ that satisfies $gx_i = x_i$ for all $i \neq n$.
\end{proposition}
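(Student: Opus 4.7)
The plan is to work in coordinates, using Lemma~\ref{lemma:sigmaFixes} so that $\sigma x_i = x_i$ for $i \neq n$. Set $c := \sigma x_n - x_n$, so that $c \in W := \Bbbk\langle x_1, \ldots, x_{n-1}\rangle$ has leading variable $x_{\beta_\sigma}$ (coefficient $1$ by the normalization of $l_\sigma$) and $\sigma^k x_n = x_n + kc$ while $\sigma^k$ acts trivially on $W$ for every $k$. I read the conclusion as tacitly permitting $g = 1$, so that $\tau = \sigma^k$ is included among the transvections produced on the right-hand side.

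For ($\Leftarrow$), if $\tau = \sigma^k g$ with $g$ fixing $x_1, \ldots, x_{n-1}$, then $\tau x_i = x_i$ for $i < n$; and, using that $g x_n - x_n \in W$ is $\sigma$-fixed,
\[
\tau x_n - x_n = kc + (g x_n - x_n).
\]
Since $kc$ has a nonzero $x_{\beta_\sigma}$-coefficient while $g x_n - x_n \in \Bbbk\langle x_1, \ldots, x_{\beta_{G'}}\rangle$ avoids $x_{\beta_\sigma}$ (as $\beta_{G'} < \beta_\sigma$), this displacement is nonzero. Hence $\tau - 1$ has one-dimensional image on $S_1$ and $\tau$ is a transvection; visibly $\tau \notin G'$.

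For ($\Rightarrow$), let $\tau \in G \minus G'$ be a transvection. Since $G/G' \simeq \ints/p\ints$ is generated by the class of $\sigma$, we may write $\tau = \sigma^k g$ uniquely with $1 \leq k \leq p-1$ and $g \in G'$. The crucial preliminary observation is that every $h \in G'$ satisfies $h x_i - x_i \in \Bbbk\langle x_1, \ldots, x_{\beta_{G'}}\rangle$ for all $i$: this holds for each transvection generator of $G'$ (which has $\beta \leq \beta_{G'}$, hence reflecting form in this subspace), the subspace is $G'$-stable by the upper-triangularity~\eqref{equation:uTriang}, and the property propagates through composition.

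With this in hand, the identity $\tau x_n - x_n = kc + (g x_n - x_n)$ shows that $\tau x_n - x_n$ is nonzero and involves $x_{\beta_\sigma}$ with coefficient $k$; hence $l_\tau$ is a scalar multiple of this expression and itself involves $x_{\beta_\sigma}$. For $i \neq n$, $\tau x_i - x_i = g x_i - x_i$ lies in $\Bbbk l_\tau$ by the transvection property of $\tau$, yet also lies in $\Bbbk\langle x_1, \ldots, x_{\beta_{G'}}\rangle$ by the preliminary; since the latter does not contain $x_{\beta_\sigma}$, the intersection is $0$. Therefore $g x_i = x_i$ for all $i \neq n$, making $g$ either the identity or a transvection of the desired shape. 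The main obstacle is the preliminary observation that the ``level $\beta_{G'}$'' property survives arbitrary products of transvections in $G'$; the remainder of the argument reduces to tracking leading variables using $\beta_\sigma > \beta_{G'}$.
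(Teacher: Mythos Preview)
Your proof is correct and follows essentially the same line as the paper's: write $\tau=\sigma^k g$, compute $\tau x_n-x_n=k(\sigma x_n-x_n)+(g x_n-x_n)$ to see that $l_\tau$ involves $x_{\beta_\sigma}$, and then force $g x_i-x_i=0$ for $i<n$ because it lies simultaneously in $\Bbbk l_\tau$ and in $\Bbbk\langle x_1,\ldots,x_{\beta_{G'}}\rangle$. You are more explicit than the paper about the preliminary observation that $hx_i-x_i\in\Bbbk\langle x_1,\ldots,x_{\beta_{G'}}\rangle$ for arbitrary $h\in G'$ (the paper uses this without comment), and your remark that $g=1$ must be tacitly permitted is a fair reading of the statement.
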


\begin{proof}
`If' is immediate, so we prove `only if'.
Since $G'$ is normal inside $G$, there exists 
$1 \leq k \leq p-1$ and $g \in G'$ that $\tau = \sigma^k g$.
We need to show that $gx_i =  x_i$ for all $i \neq n$.
It would then follow that $g$ is a transvection.

Note that
$\sigma^k g x_n = x_n + k (\sigma x_n -x_n) + (gx_n -x_n)$, 
since $gx_n -x_n \in 
\Bbbk \langle x_1, \ldots, x_{\beta_\sigma-1 } \rangle$.
For the same reason, considered along with the fact that 
$\sigma x_n -x_n \not \in 
\Bbbk \langle x_1, \ldots, x_{\beta_\sigma-1 } \rangle$,
we see that 
\begin{equation}
\label{equation:sigmakgxn}
\sigma^k g x_n -x_n \not \in 
\Bbbk \langle x_1, \ldots, x_{\beta_\sigma-1 } \rangle.
\end{equation}
Now let $i < n$.
Since $g x_i \in \Bbbk \langle x_1, \ldots, x_i \rangle$, we see that
$\sigma^k g x_i = g x_i$.
Hence 
\begin{equation}
\label{equation:sigmakgxi}
\sigma^k g x_i - x_i \in 
\Bbbk \langle x_1, \ldots, x_{\beta_\sigma-1 } \rangle
\end{equation}
Since $\sigma^kg$ is a transvection and 
$\sigma^k g x_n -x_n \neq 0$ (by~\eqref{equation:sigmakgxn})
there exists $\lambda \in \Bbbk$ such that 
\[
\sigma^k g x_i - x_i  = \lambda (\sigma^k g x_n - x_n).
\]
By~\eqref{equation:sigmakgxi}, $\lambda = 0$.
Hence $gx_i = \sigma^k gx_i = x_i$.
\end{proof}

\begin{notation}
Write $H$ for the subgroup of $G'$ generated by transvections $g \in G'$
that satisfy $gx_i = x_i$ for all $i \neq n$.
\end{notation}

\begin{proposition}
\label{proposition:BetaSigmaGreater}
We have the following expressions for $\Delta_{A/R}$:
\[
\Delta_{A/R} 
= 
\prod_{\substack{l \in S_1 \\ l = l_\tau \;\text{for some}\; \tau \in
\calP \minus G'}} l^{p-1}
=
((\sigma -1) ( \Pi_H x_n ))^{p-1}.
\]
\end{proposition}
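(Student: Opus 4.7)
The plan is to establish the two displayed equalities in turn: first that $\Delta_{A/R} = \prod_l l^{p-1}$ over the distinct reflecting lines $l_\tau$ for $\tau \in \calP \minus G'$, and then that this product coincides with $((\sigma-1)\Pi_H x_n)^{p-1}$ up to a scalar in $\Bbbk^\times$ (which we absorb into the choice of generator $\Delta_{A/R}$).

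For the first equality, by Propositions~\ref{proposition:DedekindDiffAR} and~\ref{proposition:ramLoc}, $\Delta_{A/R}$ is $G$-invariant and its minimal primes in $S$ are exactly $\{l_\tau S : \tau \in \calP \minus G'\}$. Proposition~\ref{proposition:largerBetaTransvection} parametrizes these transvections as $\sigma^k h$ with $k \in \{1,\ldots,p-1\}$ and $h \in H$. Set $u := (\sigma-1)x_n$ and $v_h := (h-1)x_n$. By Lemma~\ref{lemma:sigmaFixes} and the definition of $H$, we have $u, v_h \in \Bbbk\langle x_1, \ldots, x_{n-1}\rangle$ and $\sigma v_h = v_h$, and a short computation produces $(\sigma^k h - 1)x_n = ku + v_h$. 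Rescaling shows $l_{\sigma^k h}$ is proportional to $u + v_{k^{-1}h}$, so the distinct reflecting lines are indexed by $h \in H$ (giving $|H|$ lines, each supporting $p-1$ transvections from $\calP \minus G'$).

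For the exponent at each such line $l$, use Proposition~\ref{proposition:DiffsProducts} to write $\Delta_{S/R} = \Delta_{A/R}\Delta_{S/A}$ and Remark~\ref{remarkbox:Deltaexp} to read exponents off the inertia groups. Because every $\rho \in \calP \cap G'$ has $\beta_\rho \leq \beta_{G'} < \beta_\sigma$ and $l$ involves $x_{\beta_\sigma}$ with nonzero coefficient, no transvection in $G'$ has reflecting line $lS$, so the inertia group in $G'$ is trivial. Inside $G$, the $p-1$ transvections on $lS$ commute (two transvections with a common reflecting line always commute), and together with the identity they form a cyclic group of order $p$. The corresponding invariant ring $S^{G_{\mathrm{in}}}$ is polynomial with $n-1$ generators of degree $1$ and one of degree $p$, so Remark~\ref{remarkbox:Deltaexp} gives exponent $(p-1) - 0 = p - 1$ for $l$ in $\Delta_{A/R}$, completing the first equality.

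For the second equality, the orbit $Hx_n = \{x_n + v_h : h \in H\}$ has size $|H|$ (the stabilizer is trivial since $h \mapsto v_h$ is injective), and $V := \{v_h : h \in H\}$ is an $\bbF_p$-subspace of $\Bbbk\langle x_1,\ldots,x_{n-1}\rangle$ (from $H$ being elementary abelian and $h \mapsto v_h$ a group homomorphism). Hence $\Pi_H x_n = P(x_n)$ with $P(y) := \prod_{v \in V}(y + v)$, and $P$ is the $\bbF_p$-additive (Moore) polynomial associated to $V$, so $P(y+z) = P(y) + P(z)$. Since $\sigma$ fixes $V$ pointwise and normalizes $H$, we obtain
\[
(\sigma - 1)\Pi_H x_n = P(x_n + u) - P(x_n) = P(u) = \prod_{h \in H}(u + v_h),
\]
which is a scalar multiple of $\prod_{h \in H} l_{\sigma h}$ (using $(\sigma h - 1)x_n = u + v_h$); raising to the $(p-1)$-th power matches the first product formula. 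The main obstacle is the exponent computation, which relies on the explicit description of $S^{G_{\mathrm{in}}}$ for a cyclic order-$p$ transvection subgroup; the other key ingredient, the Moore polynomial identity, is standard but depends on the (non-obvious) fact that $V$ really is an $\bbF_p$-subspace, which in turn rests on $H$ being elementary abelian.
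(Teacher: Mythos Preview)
Your proof is correct and follows essentially the same approach as the paper: both arguments identify the linear factors of $\Delta_{A/R}$ via Propositions~\ref{proposition:relDDRamif} and~\ref{proposition:ramLoc}, compute the exponent $p-1$ by showing the inertia group of each $l_\tau S$ is trivial in $G'$ and cyclic of order $p$ in $G$, and then use the additivity of the $p$-polynomial $\Pi_H x_n$ (over the $\bbF_p$-subspace $V=\{(h-1)x_n : h\in H\}$) to match the first product with $((\sigma-1)\Pi_H x_n)^{p-1}$. The only presentational differences are that the paper obtains $|G_{\mathrm{in}}|=p$ via the isomorphism $G_{\mathrm{in}}\simeq G_{\mathrm{in}}G'/G'=G/G'$ rather than by explicitly listing the transvections $(\sigma h_0)^k$, and it packages the bijection between $H$ and the set of reflecting lines as an explicit map $\varphi:\calA\to\calB$ whose injectivity and surjectivity are checked separately.
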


\begin{proof}
We start with the observation that 
$\beta_\tau=\beta_\sigma$ for all $\tau\in  \mathcal{P} \minus G'$.
Indeed, it follows from the proof of
Proposition~\ref{proposition:largerBetaTransvection} that $\beta_\tau >
\beta_{G'}$, so $\beta_\tau=\beta_\sigma$.

We now prove the first equality.
From Propositions~\ref{proposition:relDDRamif}
and~\ref{proposition:ramLoc}, it follows that 
a linear form $l\in S$ divides $\Delta_{A/R}$ if and only if $l=l_\tau$ for
some pseudo-reflection $\tau \in \calP \minus G'$.
Therefore, for all $\tau \in \calP \minus G'$, we need 
to determine the exponent of $l_\tau$ in $\Delta_{A/R}$.

Let $G_{\mathrm{in}}$ be the inertia group of the prime ideal $l_\tau S$ for the action
of $G$ on $S$.
Note that $G_{\mathrm{in}} = \langle g\in \mathcal{P} \mid l_g=l_\tau\rangle$.
Then $G_{\mathrm{in}} \cap G' = \{1 \}$ by the observation at the beginning of this
proof;
this is the inertia group of $l_\tau S$ for the action of $G'$ on $S$.
Therefore the exponents of $l_\tau$ in $\Delta_{A/R}$ and 
in $\Delta_{S/R}$ are the same. 
The exponent of $l_\tau$ in $\Delta_{S/R}$ equals the exponent
of $l_\tau$ in
$\Delta_{S/S^{G_{\mathrm{in}}}}$, 
which equals the degree of $\Delta_{S/S^{G_{\mathrm{in}}}}$
(Remark~\ref{remarkbox:Deltaexp}).
Note that
\[
G_{\mathrm{in}} \simeq \frac{G_{\mathrm{in}}}{G_{\mathrm{in}}\cap G'} \simeq \frac{G_{\mathrm{in}}G'}{G'} = \frac{G}{G'} \simeq 
\ints/p\ints.
\]
(Since $G=\langle G', \sigma\rangle$, we see that $G_{\mathrm{in}}G' = G$.)
Thus $G_{\mathrm{in}}=\langle \tau\rangle$. 
Therefore $S^{G_{\mathrm{in}}}$ is a polynomial ring generated by $n-1$ elements of degree
$1$ and one element of degree $p$,
from which it follows that $\deg \Delta_{S/S^{G_{\mathrm{in}}}} = p-1$.

We now prove the second equality.
Note that 
$\displaystyle     W := \{gx_n - x_n \mid g \in H\}$
is a finite-dimensional $\bbF_p$-vector subspace of 
$\Bbbk[x_1, \ldots, x_{n-1}]$.
Therefore 
$\displaystyle \Pi_H x_n = \prod_{w \in W }(x_n + w)$
is a $p$-polynomial in $x_n$, a notion that comes up in the context of
Dickson
invariants; see~\cite{WilkersonPrimerDickson1983}
or~\cite[\S 8.1]{SmithInvBook1995}.
Therefore $\displaystyle (\sigma-1)(\Pi_H x_n) = 
\prod_{w \in W }((\sigma-1)x_n + w)$.

Let $\calA = \{ (\sigma-1)x_n + w \mid w \in W\}$ and 
$\calB = \{l_\tau \mid  \tau \in \calP \minus G'\}$.
For all $g \in H$, 
\begin{equation}
\label{equation:sigmagxn}
\sigma g x_n = \sigma (x_n + (g-1)x_n )  = x_n +
(\sigma-1 ) x_n + (g-1)x_n.
\end{equation}
Therefore $ \calA = \{ (\sigma g-1) x_n \mid g \in H\} $.
Let $\varphi : \calA \to \calB$ be the map that sends
$(\sigma g-1) x_n$  to $l_{\sigma g}$. 
(Note that $\sigma g \in \calP \minus G'$.)
This map is injective, which we see as follows.
Suppose that $l_{\sigma g_1 } = l_{\sigma g_2 }$
with $g_1,g_2 \in H$.
Then, by~\eqref{equation:sigmagxn}, 
there exist non-zero $\lambda_1, \lambda_2 \in \Bbbk$ such that 
\[
(\sigma-1 ) x_n + (g_i-1)x_n = (\sigma g_i-1)x_n 
= \lambda_i l_{\sigma g_i}, \qquad i=1,2
\]
Write $\lambda = \lambda_2/\lambda_1$.
Then
\[
(\sigma-1 ) x_n + (g_1-1)x_n = \lambda (\sigma-1 ) x_n + (g_2-1)x_n,
\]
from which it follows that
$(1-\lambda )(\sigma-1 ) x_n \in \Bbbk \langle x_1, \ldots,
x_{\beta_{\sigma}-1} \rangle$.
Hence $\lambda = 1$; again by~\eqref{equation:sigmagxn}, 
$(\sigma g_1-1)x_n = (\sigma g_2-1)x_n$.
Hence $\varphi$ is injective.

To see the surjectivity, let $\tau \in \calP \minus G'$.
Write $\tau = \sigma^i g$, with $1 \leq i \leq p-1$ and $g \in H$.
Let $1 \leq j \leq p-1$ be such that $ij \equiv 1 \mod p$.
Then $\tau =(\sigma g^j)^i$.
Therefore $l_\tau = l_{\sigma g^j} \in \varphi(\calA)$, as we saw above.
This proves the surjectivity.
Therefore the second equality follows up to multiplication by a non-zero
scalar.
\end{proof}

\begin{remarkbox}
\label{remarkbox:ppoly}
Let us record an argument in the above proof for future use.
The fact that 
$f(x_n) := \displaystyle \Pi_H x_n$ is a $p$-polynomial in $x_n$ 
is useful in computing $(\sigma-1)f(x_n)$.
Write $f(x_n) = x_nf_1 + x_n^p f_p + \cdots$, where $f_1, f_p, \ldots$ are
in $\Bbbk[x_1, \ldots, x_{n-1}]$.
Then 
$(\sigma-1)f(x_n)
= (\sigma-1)x_nf_1 + ((\sigma-1)x_n)^p f_p   + \cdots
$.
\end{remarkbox}

The next proposition and the paragraph after that give some information
about $H$.

\begin{proposition}
\label{proposition:commut}
Let $g \in G'$.

\begin{enumerate}

\item
\label{proposition:commut:H}
Then $\sigma g \sigma^{-1}g^{-1} \in H$.

\item
\label{proposition:commut:stab}
$\sigma g = g \sigma$ if and only if $g l_\sigma = l_\sigma$.

\end{enumerate}

\end{proposition}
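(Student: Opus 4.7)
The plan is to analyze the commutator $c := \sigma g \sigma^{-1} g^{-1}$ directly; the main inputs are Lemma~\ref{lemma:sigmaFixes} (which gives $\sigma x_i = x_i$ for every $i \neq n$), the upper-triangular form~\eqref{equation:uTriang}, and the normality of $G'$ in $G$.

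For part~\eqref{proposition:commut:H}, I would first note that $c \in G'$ by normality. The crucial step is to show that $c$ fixes each $x_i$ with $i < n$: since $g^{-1} x_i$ lies in $\Bbbk\langle x_1, \ldots, x_i\rangle$ by~\eqref{equation:uTriang}, and $\sigma^{-1}$ fixes this subspace pointwise by Lemma~\ref{lemma:sigmaFixes}, we have $\sigma^{-1} g^{-1} x_i = g^{-1} x_i$, so applying $g$ and then $\sigma$ returns $x_i$. Combined with~\eqref{equation:uTriang}, which forces $c x_n - x_n \in \Bbbk\langle x_1, \ldots, x_{n-1}\rangle$, this shows that $c$ is either the identity or a transvection of the form required by the definition of $H$; in either case $c \in H$.

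For part~\eqref{proposition:commut:stab}, the equation $\sigma g = g \sigma$ is equivalent to $c = 1$. By the previous paragraph $c$ already fixes $x_1, \ldots, x_{n-1}$, so $c = 1$ reduces to $c x_n = x_n$. Here I would write $\sigma x_n = x_n + w$ (so $w$ is a nonzero scalar multiple of $l_\sigma$, since $\sigma$ fixes the other variables) and set $v := g x_n - x_n \in \Bbbk\langle x_1, \ldots, x_{n-1}\rangle$; then a short bookkeeping calculation using $\sigma v = v$, the identity $\sigma^{-1} x_n = x_n - w$, and $g(g^{-1} x_n - x_n) = -v$ yields $c x_n - x_n = w - g w$. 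Since $g$ is $\Bbbk$-linear and $w$ is a nonzero multiple of $l_\sigma$, this vanishes precisely when $g l_\sigma = l_\sigma$. The only mildly delicate point in the whole argument is tracking $v$ and $w$ through this composition; everything else is essentially immediate from the hypotheses.
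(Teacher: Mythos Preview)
Your proof is correct and follows essentially the same approach as the paper: for part~\eqref{proposition:commut:H} both arguments use normality for $c \in G'$ and then Lemma~\ref{lemma:sigmaFixes} together with~\eqref{equation:uTriang} to show $c$ fixes $x_1,\dots,x_{n-1}$; for part~\eqref{proposition:commut:stab} the paper compares $\sigma g x_n$ with $g\sigma x_n$ directly (obtaining the difference $(g-1)l_\sigma$), whereas you compute $c x_n - x_n = w - gw$, which is the same calculation up to a harmless scalar and sign.
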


\begin{proof}
\eqref{proposition:commut:H}:
We may assume that $\sigma g  \neq g \sigma$.
For every transvection $\tau \in G'$,
$\beta_{\sigma \tau \sigma^{-1} } = \beta_\tau <
\beta_\sigma$, so $\beta_{\sigma \tau \sigma^{-1} } \in G'$.
Expressing $g$ as a product of transvections in $G'$, we see that
$\beta_{\sigma g \sigma^{-1} } \in G'$,
and, hence, that
$\sigma g \sigma^{-1}g^{-1} \in G'$.
Let $i<n$.
Then $g^{-1} x_i \in \Bbbk \langle x_1, \ldots, x_{n-1 } \rangle$, 
so $\sigma^{-1}g^{-1} x_i = g^{-1} x_i$. 
Therefore $\sigma g \sigma^{-1}g^{-1} x_i = \sigma x_i = x_i$,
i.e., $\sigma g \sigma^{-1}g^{-1} \in H$.

\eqref{proposition:commut:stab}:
Note that for all $i<n$, $g \sigma x_i = g x_i = \sigma g x_i$.
Therefore $\sigma g = g \sigma$ if and only if
$\sigma g x_n = g \sigma x_n$.
Write $l = (g-1)x_n$.
Now, $\sigma g x_n = \sigma(x_n + l) = x_n + l_\sigma + l$,
since $l \in \Bbbk \langle x_1, \ldots, x_{n-1} \rangle$.
On the other hand,
$g \sigma x_n = g(x_n + l_\sigma) =  x_n + l + l_\sigma +
(g-1) l_\sigma$.
Therefore $\sigma g = g \sigma$ 
if and only if $g l_\sigma =  l_\sigma$.
\end{proof}

Write $G''$ for the stabilizer of $l_\sigma$ inside $G'$.
Note that $H \subseteq G''$.
Using Proposition~\ref{proposition:commut} we can see that
the map $G' \to H$, $g \mapsto \sigma g \sigma^{-1} g^{-1 }$
induces an injective map $G'/G'' \to H$.
It need not be surjective: e.g., if $G'$ is abelian, then the map $G' \to
H$ is the constant map $g \mapsto 1$ but $H$ could be non-trivial; see
Example~\ref{example:main}, $H = \langle \tau_3 \rangle$.

\section{An example}
\label{section:example}

In this section we study some examples in the situation considered above where $R$ is a direct summand of $A$. We study the generating element $a$ of $A$ as an algebra over $R$. It may appear, as shown in the examples below, 
$a=\Pi_{G'} s$ for some $s\in S_1$, not fixed by $\sigma$. However, it will
be an error to conclude this to be true in general, as we shall show in Example \ref{example:main}.
 
\begin{examplebox}[\protect{\cite[Example~4.7]{ShankWehlauTransferModular1999}}]
Let $S = \bbF_2[x_1, \ldots, x_4]$.
Consider the $\bbF_2$-algebra automorphisms of $S$ given by
\[
\tau :
\begin{cases}
x_1 \mapsto x_1\\
x_2 \mapsto x_2 + x_1\\
x_3 \mapsto x_3\\
x_4 \mapsto  x_4
\end{cases},
\;
\sigma :
\begin{cases}
x_1 \mapsto x_1\\
x_2 \mapsto x_2 \\
x_3 \mapsto x_3\\
x_4 \mapsto x_4 + x_3
\end{cases}.
\]
Let $G = \langle \tau, \sigma \rangle$.
Let $G' =\langle\tau\rangle$. Then $G=\langle G', \sigma\rangle$. 

Direct calculation shows that
$\Bbbk[x_1,x_2^2+x_2x_1,x_3, x_4^2+x_4x_3] \subseteq R$.
Since $S/(x_1,x_2^2+x_2x_1,x_3, x_4^2+x_4x_3)$ is a four-dimensional
$\bbF_2$-vector-space, we see that
$\Bbbk[x_1,x_2^2+x_2x_1,x_3, x_4^2+x_4x_3] = R$.
Similarly $A = \Bbbk[x_1,x_2^2+x_2x_1,x_3, x_4]$.
Both are polynomial rings; therefore $R$ is a direct summand of $A$.
Since $A_1 \neq R_1$, we know, by
Proposition~\ref{proposition:eltWithNZTr}, that
$(1+\sigma)(a)$ generates the Dedekind different $\dedekindDiff(A/R)$
for every $a \in A_1 \minus R_1$. 
We may take $a = x_4$ (in which case $(1+\sigma)(a)=x_3$). 
Note that $a=\Pi_{G'} x_4$.

Let $H=\langle\sigma\tau\rangle$. 
Even though $H$ is not a transvection group, we can do a similar
calculation. 
Write $B = S^H$.
Then $B=R[x_4x_1+x_3x_2]$, so $B_1 = R_1$, $B_2 \neq R_2$
and $\Delta_{B/R} = (1+\sigma)(x_4x_1+x_3x_2) = x_1x_3$. 
Now take $b=\prod_{H}(x_4+x_2)=x_4^2+x_4x_3+x_2^2+x_2x_1+x_4x_1+x_3x_2$.
Then $(1+\sigma)b = \Delta_{B/R}$.
\end{examplebox}

\begin{examplebox}[Stong\protect{\cite[\S 8.1]{CampbellWehlauModularInvThy11}}]
Let $\Bbbk = \mathbb{F}_{p^3}$, with $\mathbb{F}_p$-basis $\{1, \omega, \mu \}$.
Let $\rho, \tau, \sigma \in \GL_3(\Bbbk )$, whose action on
$S = \Bbbk[x, y, z]$ is given by
\[
\rho :
\begin{cases}
x \mapsto x\\
y \mapsto y + x\\
z \mapsto z\\
\end{cases},
\;
\tau :
\begin{cases}
x \mapsto x\\
y \mapsto y\\
z \mapsto z + x\\
\end{cases},
\;
\sigma :
\begin{cases}
x \mapsto x\\
y \mapsto y + \omega x \\
z \mapsto z + \mu x\\
\end{cases}.
\]
Let $G' = \langle \rho, \tau \rangle$ and 
$G = \langle \rho, \tau, \sigma \rangle$.
Then we have the following:
\begin{align*}
A & = \Bbbk[x, N_2, N_3] \;\text{where}\; N_2=y^p-yx^{p-1},
N_3=z^p-zx^{p-1};
\\
R & = \Bbbk[x, (\omega^p-\omega)N_2-(\mu^p-\mu)N_3, 
N_2^p-(\omega^p-\omega)^{p-1}N_2x^{p(p-1)}].
\end{align*}
Since $R$ and $A$ are polynomial rings, $R$ is a direct summand of $A$.
Note that $p = \min \{j \mid A_j \neq R_j \}$.
We can take $a$ to be $N_2=\Pi_{G'}y$, or $N_3=\Pi_{G'}z$.

We could also consider $G' = \langle \rho, \sigma \rangle$ or 
$G' = \langle \tau, \sigma \rangle$. In these cases also, one can check
that there is a choice of $a$ that of the form $\Pi_{G'}s$ for some $s \in
S_1$.
\end{examplebox}

Now we construct the example mentioned in Section~\ref{section:intro}. 
Here $G$ and $G'$ are transvection groups such that $G'$ is a normal
subgroup of $G$ and $G=\langle G', \sigma\rangle$ with $\sigma$ a
transvection and $\beta_\sigma > \beta_{G'}$.

\begin{observationbox}
\label{observationbox:PiGprimes}
Let $s \in S$. Note that 
$\sigma(\Pi_{G'}s) = \Pi_{G'}(\sigma s)$ since $G'$ is normal in $G$.
In particular, if $s \in S^{\langle \sigma\rangle}$, then 
$\Pi_{G'}s \in R$. On the other hand, 
assume that $\beta_\sigma > \beta_{G'}$ and that
$s\in S_1\minus S^{\langle \sigma\rangle}$.
Then $\prod_{G'}s \not \in R$.
It can be seen as follows: by
way of contradiction, if $\sigma(\prod_{G'}s)=\prod_{G'}s$, then
$\sigma(g_1s)=g_2s$ for some $g_1, g_2\in G'$. 
As we saw in the proof of Proposition~\ref{proposition:BetaSigmaGreater},
$(\sigma g_1)s-s = \sigma s-s + g_1 s -s$, so 
$\sigma(s)-s = g_2s-s-(g_1s-s)$, which is a contradiction since
$\beta_{\sigma} > \beta_{G'}$.
In particular, $\deg (\prod_{G'}s) \geq \min \{j \mid A_j \neq R_j\}$.
In the example below, we show that equality does not hold for any 
$s\in S_1\minus S^{\langle \sigma\rangle}$.
\end{observationbox}

\begin{example}
\label{example:main}
Let $\Bbbk$ be a field of characteristic $p>0$ containing two elements
$\alpha$, $\beta$ such that $\alpha \not \in \bbF_p$ and $\beta \not \in 
\bbF_p(\alpha )$. 
Let $S = \Bbbk[x_1, x_2, x_3, y ]$. Let $G=\langle \tau_1, \tau_2, \tau_3, \sigma\rangle$ with the action of $G$ on $S$ is defined as follows:
\[
\tau_1 :
\begin{cases}
x_1 \mapsto x_1\\
x_2 \mapsto x_2 + x_1\\
x_3 \mapsto x_3\\
y \mapsto y + x_1
\end{cases},
\;
\tau_2 :
\begin{cases}
x_1 \mapsto x_1\\
x_2 \mapsto x_2 + \alpha x_1\\
x_3 \mapsto x_3\\
y \mapsto y + x_1
\end{cases},
\;
\tau_3 :
\begin{cases}
x_1 \mapsto x_1\\
x_2 \mapsto x_2 \\
x_3 \mapsto x_3\\
y \mapsto y + \beta x_1
\end{cases},
\;
\sigma :
\begin{cases}
x_1 \mapsto x_1\\
x_2 \mapsto x_2 \\
x_3 \mapsto x_3\\
y \mapsto y + x_3
\end{cases}.
\]
$G$ is an abelian group. 
Let $G' = \langle \tau_1, \tau_2, \tau_3\rangle$. 
Then $G = \langle G', \sigma\rangle$.
Note that $\beta_{G'} = 1$ and $\beta_{G} = \beta_\sigma = 3$.
In the next propositions, we show that $R$ is a direct summand of $A$ 
but there does not exist $s \in S_1$ such that 
$\Delta_{A/R}  = \Trace_{F/K}((\Pi_{G'}s)^{p-1}) \in A$.
\end{example}

\begin{proposition}
\label{proposition:deltaARmainEx}
$\Delta_{A/R} = (x_3(x_3 + \beta x_1)\cdots (x_3 + (p-1)\beta x_1))^{p-1}$.
\end{proposition}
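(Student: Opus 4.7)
The plan is to apply the second expression in Proposition~\ref{proposition:BetaSigmaGreater}, which gives $\Delta_{A/R} = ((\sigma - 1)(\Pi_H y))^{p-1}$ (here the variable $y$ plays the role of $x_n$). So I need to (i) identify $H$ explicitly, (ii) compute the product $\Pi_H y$, and (iii) apply $\sigma - 1$ and factor the result.

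First I will identify $H$. Since $G'$ is abelian and generated by $\tau_1, \tau_2, \tau_3$, every element of $G'$ has the form $g = \tau_1^a \tau_2^b \tau_3^c$ with $a, b, c \in \bbF_p$, and acts by $gx_2 = x_2 + (a + b\alpha)x_1$, $gy = y + (a + b + c\beta)x_1$, fixing $x_1$ and $x_3$. Such a $g$ fixes $x_2$ if and only if $a + b\alpha = 0$, and the hypothesis $\alpha \notin \bbF_p$ forces $a = b = 0$. Moreover, every non-identity element of $G'$ is a transvection (its fixed hyperplane in $S_1$ is cut out by a single linear equation in the coefficients of $x_2$ and $y$). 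Hence $H = \langle \tau_3 \rangle$, a cyclic group of order $p$.

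Next, I will compute $\Pi_H y$. The $H$-orbit of $y$ is $\{y + i\beta x_1 \mid 0 \leq i < p\}$, and the standard identity $\prod_{i=0}^{p-1}(T + ic) = T^p - c^{p-1}T$ in characteristic $p$ gives
\[
\Pi_H y \;=\; y^p - \beta^{p-1} x_1^{p-1}\, y.
\]
This is a $p$-polynomial in $y$ with $f_1 = -\beta^{p-1} x_1^{p-1}$ and $f_p = 1$, so Remark~\ref{remarkbox:ppoly} with $(\sigma - 1)y = x_3$ yields
\[
(\sigma - 1)(\Pi_H y) \;=\; -\beta^{p-1} x_1^{p-1} x_3 + x_3^p \;=\; x_3\bigl(x_3^{p-1} - (\beta x_1)^{p-1}\bigr).
\]

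Finally, since $\bbF_p^\times$ is precisely the set of roots of $T^{p-1} - 1$, we have $X^{p-1} - Y^{p-1} = \prod_{i=1}^{p-1}(X - iY)$, and reindexing over $\bbF_p^\times$ by $i \mapsto -i$ gives
\[
(\sigma - 1)(\Pi_H y) \;=\; x_3 \prod_{i=1}^{p-1}(x_3 + i\beta x_1).
\]
Raising to the $(p-1)$-th power completes the proof (up to a nonzero scalar in $\Bbbk$, absorbed into the choice of generator of $\dedekindDiff(A/R)$). The main thing to check carefully is the identification of $H$, where the irrationality condition $\alpha \notin \bbF_p$ is essential; the rest is a direct computation facilitated by the $p$-polynomial structure of $\Pi_H y$.
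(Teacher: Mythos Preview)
Your proof is correct and follows exactly the approach of the paper: apply the second expression of Proposition~\ref{proposition:BetaSigmaGreater}, identify $H=\langle\tau_3\rangle$ using $\alpha\notin\bbF_p$, and then compute $(\sigma-1)(\Pi_H y)$ via Remark~\ref{remarkbox:ppoly}. You have simply written out in full the details that the paper leaves implicit.
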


\begin{proof}
We apply the second expression for $\Delta_{A/R}$ from
Proposition~\ref{proposition:BetaSigmaGreater}.
In the notation of that proposition, $H = \langle \tau_3 \rangle$.
(We need to use that $\alpha \not \in \bbF_p$.)
Now use Remark~\ref{remarkbox:ppoly}.
\end{proof}

In the next proposition we find an element of $A_p \minus R_p$, which will
be used to prove that $R$ is a direct summand of $A$.

\begin{proposition}
\label{proposition:ApNeqRp}
Let $\lambda  = \frac{(\beta^{p-1}-1)} {(\alpha + \cdots + \alpha^{p-1})}$,
$\mu = (1 + \alpha + \cdots + \alpha^{p-1})\lambda$ 
and 
\[
a  = y^p - \beta^{p-1}x_1^{p-1}y - \lambda x_2^p + \mu x_1^{p-1}x_2.
\]
\begin{enumerate}
\item 
\label{proposition:ApNeqRp:aNotInR}
Then $a \in A \minus R$.

\item 
\label{proposition:ApNeqRp:sMinusa}
Let $s\in A\minus R$ be homogeneous of degree $p$.
Then $s-a\in\Bbbk[x_1,x_3]\subseteq R$.
\end{enumerate}
\end{proposition}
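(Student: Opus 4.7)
For part~(a), I would verify $\tau_i(a) = a$ for $i = 1, 2, 3$ by direct expansion. The case of $\tau_3$ (which fixes $x_1, x_2, x_3$ and shifts $y$ by $\beta x_1$) is immediate from Frobenius: $(y + \beta x_1)^p = y^p + \beta^p x_1^p$ cancels the correction in $-\beta^{p-1} x_1^{p-1} y$. For $\tau_1$ and $\tau_2$, after expansion $\tau_i(a) - a$ is supported on the monomial $x_1^p$, with coefficients $1 - \beta^{p-1} - \lambda + \mu$ and $1 - \beta^{p-1} - \lambda\alpha^p + \mu\alpha$, respectively; the defining formulas for $\lambda, \mu$ yield $\mu - \lambda = \beta^{p-1} - 1$ and $\mu\alpha - \lambda\alpha^p = \beta^{p-1} - 1$, forcing both coefficients to vanish. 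Finally, $\sigma a - a = x_3^p - \beta^{p-1} x_1^{p-1} x_3 \neq 0$, so $a \notin R$.

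For part~(b), the plan is to prove the stronger structural fact
\[
A_p = \Bbbk[x_1, x_3]_p \oplus \Bbbk \cdot a, \qquad R_p = \Bbbk[x_1, x_3]_p,
\]
from which the conclusion follows up to the normalization of the $\Bbbk a$-coefficient of $s$. First, the smallest degree $d$ of an element of $A \setminus R$ equals $p$: part~(a) gives $d \leq p$, while the bound $\deg \Delta_{A/R} \leq (p-1)d$ from Proposition~\ref{proposition:eltWithNZTr} combined with $\deg \Delta_{A/R} = p(p-1)$ from Proposition~\ref{proposition:deltaARmainEx} gives $d \geq p$. The equality $\deg \Delta_{A/R} = (p-1)d$ then triggers the implication (a)$\Rightarrow$(c) of Proposition~\ref{proposition:eltWithNZTr}, so $R \subseteq A$ splits; Broer's theorem (invoked in the proof of that implication) presents $A = R[a']$ as a graded free $R$-algebra with basis $\{1, a', \ldots, (a')^{p-1}\}$ where $a'$ is homogeneous of degree $p$. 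Reading off the degree-$p$ piece yields $A_p = R_p \oplus \Bbbk a'$, and replacing $a'$ by $a$ (which differs from $a'$ by an element of $R_p$) gives $A_p = R_p \oplus \Bbbk a$.

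To identify $R_p$, I use the decomposition $A = A'[x_3]$ with $A' := \Bbbk[x_1, x_2, y]^{G'}$ (valid since $G'$ fixes $x_3$) and compute $A'_j$ for $j \leq p$. Any $f \in R_j \cap \Bbbk[x_1, x_2, y]_j$ satisfies $\sigma f = f$; since $f$ has no $x_3$ and $\sigma y = y + x_3$, this forces $f$ to be independent of $y$, and then invariance of $f \in \Bbbk[x_1, x_2]_j$ under $\tau_1, \tau_2$ gives $f \in \Bbbk x_1^j$, because the relevant norm polynomial $\prod_{c \in \bbF_p + \bbF_p \alpha}(x_2 - c x_1)$ has degree $p^2 > j$. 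For $j < p$, $A'_j = A_j \cap \Bbbk[x_1, x_2, y]_j = R_j \cap \Bbbk[x_1, x_2, y]_j = \Bbbk x_1^j$ (using $A_j = R_j$ by $d = p$); for $j = p$, the Broer decomposition restricted to no-$x_3$ elements gives $A'_p = (R_p \cap \Bbbk[x_1, x_2, y]_p) \oplus \Bbbk a = \Bbbk x_1^p \oplus \Bbbk a$. Assembling, $A_p = \bigoplus_{k=0}^{p} x_3^k A'_{p-k} = \Bbbk[x_1, x_3]_p \oplus \Bbbk a$; since $\sigma$ acts trivially on $\Bbbk[x_1, x_3]_p$ while $\sigma a - a \in \Bbbk[x_1, x_3]_p$ is non-zero, the $\sigma$-fixed subspace is exactly $\Bbbk[x_1, x_3]_p$, giving $R_p = \Bbbk[x_1, x_3]_p$. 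I expect the main obstacle is this invariant-dimension analysis, which blends the Broer decomposition with an elementary $\sigma$- and $G'$-invariance argument inside $\Bbbk[x_1, x_2, y]$; both the splitting via Broer and the final $\sigma$-fixedness check are then essentially routine.
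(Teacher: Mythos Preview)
Your argument is correct, but it takes a genuinely different route from the paper's. The paper proves part~(b) by a direct, completely elementary computation: write $s = \sum_{i=0}^{p} s_i y^i$ with $s_i \in \Bbbk[x_1,x_2,x_3]$, then repeatedly apply $\tau_3$, $\tau_1$, $\tau_2$ to force $s_i = 0$ for $2 \leq i \leq p-1$, then $s_p = 1$ (after normalization), $s_1 = -\beta^{p-1}x_1^{p-1}$, and finally $s_0 = -\lambda x_2^p + \mu x_1^{p-1}x_2 + s_{0,0}$ with $s_{0,0} \in \Bbbk[x_1,x_3]$. No machinery from Sections~2--4 is used, and in particular the paper's logical order is that Proposition~\ref{proposition:ApNeqRp} is an input to the subsequent proof that $R$ is a direct summand of $A$. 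Your approach inverts this: you use part~(a) together with Proposition~\ref{proposition:deltaARmainEx} and the general bound $\deg\Delta_{A/R}\le(p-1)d$ to conclude $d=p$, then invoke Proposition~\ref{proposition:eltWithNZTr} and Broer's structure theorem to obtain the splitting $A_p = R_p \oplus \Bbbk a$ \emph{first}, and only then identify $R_p$ via the decomposition $A = A'[x_3]$ and an easy $\sigma$-fixedness check. This is valid and not circular (nothing you cite depends on part~(b)), and it nicely illustrates how the structural results of the paper can short-circuit the explicit calculation; the trade-off is that the paper's hands-on approach is self-contained and avoids importing Broer's hypersurface theorem. Both proofs share the same normalization caveat (the conclusion $s-a\in\Bbbk[x_1,x_3]$ is literally correct only after scaling $s$ so its $y^p$-coefficient is $1$), which you flag and the paper handles with a ``without loss of generality''.
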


\begin{proof} 
\eqref{proposition:ApNeqRp:aNotInR}:
It is easy to see that $\tau_i(a)=a$ for $i=1,2,3$ and $\sigma(a) - a = -\beta^{p-1} x_1^{p-1}x_3  + x_3^p  \neq 0.$
Hence $a \in A \minus R$.

\eqref{proposition:ApNeqRp:sMinusa}:
Let 
$s = \sum_{i=0}^p s_iy^i \in A,$ with $s_i \in \Bbbk[x_1, x_2, x_3]$,
homogeneous of degree~$p-i$.
Then $s_i \neq 0$ for some $i>0$.
We will show that $s-a\in R$.

Let $i$ be the largest integer such that $1 \leq i < p$ and $s_i \neq 0$.
Then 
$\Delta_{y}^{(i)}(s) = s_i \in \Bbbk[x_1, x_2, x_3]^{G'} = 
\Bbbk[x_1, \Pi_{G'}x_2, x_3]$. Since $\deg s_i < p$, we further see that
$s_i \in \Bbbk[x_1, x_3]$.
Suppose that $i > 1$. Then
$\Delta_{y}^{(i-1)}(s) = s_{i-1} + i! s_iy \in A$.
Hence $$s_{i-1} + i! s_iy  = \tau_3(s_{i-1} + i! s_iy) = s_{i-1} + i! s_i (y +
\beta x_1),$$ a contradiction. Therefore 
$s = s_0 + s_1y + s_p y^p;$ with $s_1 \in \Bbbk[x_1, x_3].$ Moreover, $s_p\neq 0$. For if $s_p = 0$, then $\tau_3(s) = s_0 + s_1(y+\beta x_1)\neq s$. Therefore, without loss of generality, we assume $s_p = 1$.
We can now write
\begin{equation*}
\label{equation:exprOfsLeadingCoeffNormalised}
s = y^p + s_1y + s_0.
\end{equation*}
We now explicitly compute $s_1$. Note that
\[
\tau_3(s_0) + s_1(y+\beta x_1 ) + (y^p+\beta^px_1^p )
= \tau_3(s) = s = s_0 + s_1y + y^p.
\]
Since $\tau_3(s_0)=s_0$, we must have $ \beta s_1x_1 + \beta^p x_1^p  = 0, 
\;\text{i.e.,}\; s_1 = -(\beta x_1)^{p-1}.
$
Thus
\begin{equation}
\label{equation:exprOfsCoeffOfy}
s = y^p - \beta^{p-1}x_1^{p-1}y+ s_0.
\end{equation}
Now we compute $s_0$. Using $\tau_1(s)=s$ and $\tau_2(s)=s$, we get 
\begin{equation*}
\tau_1(s_0) - s_0 = \tau_2(s_0) - s_0 = (\beta^{p-1}-1)x_1^p.
\end{equation*}
Write 
$s_0 = \sum_{i=0}^p s_{0,i} x_2^i$
\;\text{with}\; $s_{0,i } \in \Bbbk[x_1,x_3]$, homogeneous of degree $p-i$. 
Then
\[
\tau_1(s_0) - s_0 
= 
\sum_{i=1}^{p-1} s_{0,i} \sum_{j=0}^{i-1} \binom{i}{j} x_2^j x_1^{i-j}
+s_{0,p } x_1^p
=(\beta^{p-1}-1)x_1^p
\]
from which we see that 
$s_{0,1 } \in \Bbbk\langle x_1^{p-1} \rangle,$ and
$s_{0,i}  = 0 \;\text{for each}\; 2 \leq i \leq p-1$. Hence
\begin{equation}
\label{equation:ExprOfsZero}
s_0 = s_{0,0} + s_{0,1} x_2 + s_{0,p}x_2^p
\end{equation} 
and 
\begin{equation}
\label{equation:sZeroOnesZeropBeta}
s_{0,1 } x_1 + s_{0,p } x_1^p =\tau_1(s_0)-s_0= (\beta^{p-1}-1)x_1^p.
\end{equation}
Similarly
\[
\tau_2(s_0) - s_0 
= 
\sum_{i=1}^{p-1} s_{0,i} \sum_{j=0}^{i-1} \binom{i}{j} x_2^j (\alpha x_1)^{i-j}
+s_{0,p } \alpha^px_1^p
=(\beta^{p-1}-1)x_1^p
\]
from which we see, additionally, that 
\begin{equation}
\label{equation:sZeroOnesZeropAlphaBeta}
s_{0,1 } \alpha x_1 + s_{0,p } \alpha^p x_1^p = (\beta^{p-1}-1)x_1^p.
\end{equation}
Since $\alpha \neq 1$, we get from~\eqref{equation:sZeroOnesZeropBeta}
and~\eqref{equation:sZeroOnesZeropAlphaBeta} that 
\begin{equation*}
\label{equation:sZeroOneInTermsOfAlphasZerop}
s_{0,1 } = -(1 + \alpha + \cdots + \alpha^{p-1})s_{0,p }x_1^{p-1}
\end{equation*}
Therefore
\begin{equation*}
\label{equation:RelationBtwnAlphaBeta}
-(\alpha + \cdots + \alpha^{p-1})s_{0,p} = (\beta^{p-1}-1),
\;\text{i.e.,}\; s_{0,p}=-\lambda.
\end{equation*} Hence
\[
s = 
y^p - \beta^{p-1}x_1^{p-1}y -\lambda x_2^p+ \mu x_1^{p-1}x_2+s_{0,0}=a+s_{0,0}.
\qedhere
\]
\end{proof}

\begin{proposition}
$R$ is a direct summand of $A$.
\end{proposition}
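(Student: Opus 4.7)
The plan is to deduce this directly from Proposition~\ref{proposition:eltWithNZTr}, using the two preceding propositions (\ref{proposition:deltaARmainEx} and~\ref{proposition:ApNeqRp}) to pin down both $\deg \Delta_{A/R}$ and the smallest degree of an element in $A\minus R$.

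First, I would read off from Proposition~\ref{proposition:deltaARmainEx} that $\deg \Delta_{A/R} = p(p-1)$. Next, let $a'\in A\minus R$ be a homogeneous element of smallest degree. Combining Lemma~\ref{lemma:minDegElt} with Proposition~\ref{proposition:splitIffTrSurj} (as already noted in the paragraph preceding Proposition~\ref{proposition:eltWithNZTr}), the non-zero element $\Trace_{F/K}((a')^{p-1}) = -(\sigma a' - a')^{p-1}$ belongs to $\Delta_{A/R}R$, which forces the degree inequality $\deg \Delta_{A/R} \leq (p-1)\deg a'$. Plugging in $\deg \Delta_{A/R} = p(p-1)$ gives $\deg a' \geq p$.

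The matching lower bound is exactly what Proposition~\ref{proposition:ApNeqRp}\eqref{proposition:ApNeqRp:aNotInR} supplies: the explicit element $a\in A_p\minus R_p$ constructed there shows that the minimum degree is at most $p$. Hence $\deg a' = p$, i.e., $\deg \Delta_{A/R} = p(p-1) = (p-1)\deg a'$.

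This is precisely condition~\eqref{proposition:eltWithNZTr:degDelta} of Proposition~\ref{proposition:eltWithNZTr}, so the equivalent condition~\eqref{proposition:eltWithNZTr:splits} holds, and $R$ is a direct summand of $A$. There is no real obstacle here; the substantive work is already done in Propositions~\ref{proposition:deltaARmainEx} and~\ref{proposition:ApNeqRp}, and this final step is a short matching of degrees. The point worth emphasizing in the writeup is that one does \emph{not} need to separately verify $A_j = R_j$ for $j<p$: the degree bound from Proposition~\ref{proposition:splitIffTrSurj}, together with the value of $\deg \Delta_{A/R}$, forces this automatically.
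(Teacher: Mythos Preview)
Your proof is correct and follows the same outline as the paper: combine Proposition~\ref{proposition:deltaARmainEx} (for $\deg\Delta_{A/R}$), Proposition~\ref{proposition:ApNeqRp} (for an element of $A_p\minus R_p$), and Proposition~\ref{proposition:eltWithNZTr}. The one genuine difference is in how you establish $\min\{j\mid A_j\neq R_j\}=p$. The paper simply asserts this follows from Proposition~\ref{proposition:ApNeqRp}, implicitly relying on an adaptation of the argument in part~\eqref{proposition:ApNeqRp:sMinusa} to rule out lower degrees. You instead extract the lower bound $\deg a'\geq p$ from the general inequality $\deg\Delta_{A/R}\leq(p-1)\deg a'$ (via Lemma~\ref{lemma:minDegElt} and Proposition~\ref{proposition:splitIffTrSurj}), so that only part~\eqref{proposition:ApNeqRp:aNotInR} of Proposition~\ref{proposition:ApNeqRp} is needed. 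This is a cleaner way to close the argument and makes the logic self-contained; your final remark that no separate check of $A_j=R_j$ for $j<p$ is required is exactly the payoff.
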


\begin{proof}
It follows from Proposition~\ref{proposition:ApNeqRp}
that $p = \min\{j \mid A_j \neq R_j \}$. 
Note that $\deg \Delta_{A/R } = p(p-1 )$
(Proposition~\ref{proposition:deltaARmainEx}).
Now, apply Proposition~\ref{proposition:eltWithNZTr}
for any $a \in A_p \minus R_p$.
\end{proof}

\begin{proposition}
There does not exist 
$s \in S_1$ 
such that
\[
\frac{\Trace_{F/K}((\Pi_{G'}s)^{p-1})}{\Delta_{A/R}} \in \Bbbk^\times.
\]
\end{proposition}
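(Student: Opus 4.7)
The plan is to show that for any $s \in S_1$, the degree of $\Trace_{F/K}((\Pi_{G'}s)^{p-1})$ cannot equal the degree of $\Delta_{A/R}$, so the ratio cannot be a nonzero scalar.

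First I would parametrize: write $s = c_1 x_1 + c_2 x_2 + c_3 x_3 + c_4 y$ and observe we may assume $c_4 = 1$ after rescaling. Indeed, if $c_4 = 0$ then $s \in S^{\langle\sigma\rangle}_1$, so by Observation~\ref{observationbox:PiGprimes}, $\Pi_{G'}s \in R$, and its trace from $F$ to $K$ is $p$ times itself, hence zero in characteristic $p$, giving ratio $0 \notin \Bbbk^\times$. Next, using that $G' = \langle \tau_1, \tau_2, \tau_3 \rangle$ is abelian and computing the action on $s$, a direct calculation gives
\[
\tau_1^{m_1}\tau_2^{m_2}\tau_3^{m_3}(s) = s + \bigl(m_1(c_2+1) + m_2(c_2\alpha+1) + m_3 \beta\bigr) x_1
\]
for all $m_i \in \bbF_p$. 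Hence $G's = \{s + \lambda x_1 \mid \lambda \in V\}$ where $V \subseteq \Bbbk$ is the $\bbF_p$-subspace spanned by $c_2+1$, $c_2\alpha+1$ and $\beta$, and so $\Pi_{G'}s$ is homogeneous of degree $|V| = p^{\dim_{\bbF_p}V}$.

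Since $\Trace_{F/K}((\Pi_{G'}s)^{p-1})$ is either zero or homogeneous of degree $(p-1)p^{\dim V}$, and since $\deg \Delta_{A/R} = p(p-1)$ by Proposition~\ref{proposition:deltaARmainEx}, the ratio being in $\Bbbk^\times$ forces $\dim_{\bbF_p} V = 1$. The crux of the proof is then ruling this out, which is where the standing hypotheses on $\alpha$ and $\beta$ enter. If $\dim V \leq 1$, then since $\beta \neq 0$ there exist $a,b \in \bbF_p$ with $c_2 + 1 = a\beta$ and $c_2\alpha + 1 = b\beta$; eliminating $c_2$ gives $\beta(a\alpha - b) = \alpha - 1$. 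A short case analysis on whether $a\alpha = b$ in $\Bbbk$ forces either $\alpha \in \bbF_p$ or $\beta \in \bbF_p(\alpha)$, contradicting the hypotheses of Example~\ref{example:main}.

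Therefore $\dim V \geq 2$, so $\deg \Trace_{F/K}((\Pi_{G'}s)^{p-1}) \geq (p-1)p^2 > p(p-1) = \deg \Delta_{A/R}$, and the ratio cannot lie in $\Bbbk^\times$. The main obstacle is largely bookkeeping, namely confirming the orbit size is exactly $|V|$ (which follows from the orbit-stabilizer theorem once one computes that the stabilizer of $s$ in $G'$ has order $p^{3 - \dim V}$) and ensuring the linear-algebra case analysis over $\bbF_p$ at the end is exhaustive.
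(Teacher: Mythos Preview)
Your proof is correct and follows essentially the same route as the paper's: reduce to $s$ with nonzero $y$-coefficient via Observation~\ref{observationbox:PiGprimes}, compute that the $G'$-orbit of $s$ is $\{s+\lambda x_1 \mid \lambda \in V\}$ with $V$ the $\bbF_p$-span of $\{c_2+1,\,c_2\alpha+1,\,\beta\}$, and then use the hypotheses $\alpha\notin\bbF_p$, $\beta\notin\bbF_p(\alpha)$ to force $\dim_{\bbF_p}V\geq 2$, so that $\deg\Pi_{G'}s\geq p^2$ and the degree comparison with $\deg\Delta_{A/R}=p(p-1)$ finishes it. The paper writes the same computation with $\gamma$ in place of your $c_2$ (implicitly dropping the $x_1,x_3$-coefficients, which do not affect the orbit) and leaves the final degree-comparison step unstated; your version spells that step out, which is a small expository improvement but not a different argument.
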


\begin{proof}
Let $s \in S_1$. 
In view of Observation~\ref{observationbox:PiGprimes}, we may assume that
$y$ appears with a non-zero coefficient in $s$.
We show that the $G'$-orbit of
$s$ has more than $p$ elements. For $0\leq i, j, k<p$, we have
\[
(\tau_1^i\tau_2^j\tau_3^k)(y+\gamma x_2)
=y+\gamma x_2 + (i+j+k\beta + i\gamma + j\gamma\alpha) x_1.
\]
Consider the $\bbF_p$-span of $\{1+\gamma, 1+\gamma\alpha, \beta\}
\subseteq \Bbbk$. Its dimension is at least $1$, since $\beta \neq 0$.
Suppose that its dimension is $1$. If $1+\gamma = 0$, then $1+\gamma\alpha =
1-\alpha$ which is not in the $\bbF_p$-span of $\beta$, so the rank is $2$,
a contradiction. Hence $1+\gamma \neq 0$. Therefore we can write 
$1+\gamma = m\beta$ and $1+\gamma\alpha = n\beta$ with $m, n \in \bbF_p$, 
$m \neq 0$. From this we get $1+(m\beta-1)\alpha = n\beta$, i.e., $1-\alpha = (n-m\alpha)\beta$, i.e., $\beta \in \bbF_p(\alpha)$, a contradiction. 
Hence the $\bbF_p$-span of $\{1+\gamma, 1+\gamma\alpha, \beta\}$ has rank
at least $2$. In other words, the $G'$-orbit of each $s \in S_1$ involving
$y$ has at least $p^2$ elements.
\end{proof}

Since $A$ is a polynomial ring
(\cite[Theorem~5, Appendix]{LandweberStongDepth1987})
and $R$ is a direct summand of $A$, $R$ is a direct summand of $S$.
Further, since $G$ is abelian, it follows that $R$ is a polynomial
ring~\cite[Theorem~1.1]{BroerInvThyAbelianTransvGps2010}.

\begin{examplebox}
Continue with Example~\ref{example:main}.
Let $\widetilde{G }$ be the subgroup of $G$ generated by $\tau_1$, $\tau_2$
and $\sigma$.
Write $B = S^{\widetilde{G}}$.
We show that $B$ is not a direct summand of $S$, though $R = B^{\langle
\tau_3\rangle}$ is a direct summand of $S$.
Write $C = S^{\langle\tau_1, \tau_2\rangle}$.
We now apply the second expression in 
Proposition~\ref{proposition:BetaSigmaGreater} to the
extension $B
\subseteq C$: Note that
\[
\{g \in \langle \tau_1, \tau_2 \rangle \mid gx_i = x_i \;\text{for all}\;
1 \leq i \leq 3 \} = \langle 1 \rangle,
\]
so $\Delta_{C/B} = ((\sigma-1)y)^{p-1} = x_3^{p-1}$. 
Therefore, to conclude that $B$ is not a direct summand of $C$ (and hence
also not of $S$), it suffices, by Proposition~\ref{proposition:eltWithNZTr}, 
to show that $B_1 = C_1$.
One can check this in \cite{M2} and \cite{Singular}, but here is a quick
proof. $C_1$ is the intersection of the eigen-spaces of $\tau_1$ and
$\tau_2$ inside $S_1$ for the eigen-value $1$. Each eigen-space is
three-dimensional, and they are not equal. Hence $C_1$ is two-dimensional, and, therefore $C_1 = \Bbbk \langle x_1, x_3 \rangle \subseteq B_1
\subseteq C_1$. In summary we have the following diagram
\[
\xymatrix{%
A \ar[r]^{\text{split}}  & C \ar[r]^{\text{split}}  & S 
\\
R \ar[r]^{\text{split}}  \ar[u]^{\text{split}} & B \ar[u]_{\text{non-split}} 
}
\]
All the groups in question are transvection groups and each arrow other
than $C \to S$ corresponds to $\ints/p\ints$, generated by the class of a
transvection.
\end{examplebox}

\end{document}